\documentclass[12pt]{amsart}

\usepackage[english]{babel}
\usepackage[utf8x]{inputenc}
\usepackage[T1]{fontenc}
\usepackage{todonotes}
\usepackage{tikz}
\usepackage{mathtools}

\usepackage[a4paper,top=3cm,bottom=3cm,left=3cm,right=3cm,marginparwidth=1.75cm]{geometry}

\usepackage{varioref}
\usepackage{hyperref}
\urlstyle{sf}
\usepackage[nameinlink, capitalize, noabbrev]{cleveref}

\usepackage{graphicx}
\usepackage{amsmath}
\theoremstyle{plain} 
\newtheorem{thmx}{Theorem}

\newtheorem{thm}{Theorem}[section]
\newtheorem{lemma}[thm]{Lemma}
\newtheorem{prop}[thm]{Proposition}
\newtheorem{corl}[thm]{Corollary}

\theoremstyle{definition} 
\newtheorem{defn}[thm]{Definition}

\theoremstyle{remark} 
\newtheorem{rmk}[thm]{Remark}
\linespread{1.1}

\numberwithin{equation}{section}

\DeclareMathOperator{\Id}{Id}     

\newcommand{\A}{\mathcal{A}}		
\newcommand{\B}{\mathcal{B}}		
\newcommand{\R}{\mathbb{R}}			
\newcommand{\C}{\mathbb{C}}			
\newcommand{\Z}{\mathbb{Z}}			
\newcommand{\N}{\mathbb{N}}			
\newcommand{\T}{\mathbb{T}}			
\newcommand{\U}{\mathcal{U}}        

\newcommand{\GS}{\mathcal{G}}		    
\newcommand{\Sub}{\Delta}     			
\newcommand{\Subo}{\Delta^{\circ}}		
\newcommand{\dif}{\mathrm{d}}           
\newcommand{\cbar}{\overline{c}}

\def\hs#1#2{\left\langle #1,#2\right\rangle} 

\def\tfp#1{#1 \times \widehat{#1}} 

\title[Spectral invariance of twisted convolution algebras]{Spectral invariance of $*$-representations of twisted convolution algebras with applications in Gabor analysis}
\author{Are Austad}
\date{\today}
\address{Norwegian University of Science and Technology, Department of Mathematical Sciences, Trondheim, Norway.}
\email{are.austad@ntnu.no}

\begin{document}
\maketitle

\begin{abstract}
We show spectral invariance for faithful $*$-representations for a class of twisted convolution algebras. More precisely, if $G$ is a locally compact group with a continuous $2$-cocycle $c$ for which the corresponding Mackey group $G_c$ is $C^*$-unique and symmetric, then the twisted convolution algebra $L^1 (G,c)$ is spectrally invariant in $\mathbb{B}(\mathcal{H})$ for any faithful $*$-representation of $L^1 (G,c)$ as bounded operators on a Hilbert space $\mathcal{H}$. As an application of this result we give a proof of the statement that if $\Delta$ is a closed cocompact subgroup of the phase space of a locally compact abelian group $G'$, and if $g$ is some function in the Feichtinger algebra $S_0 (G')$ that generates a Gabor frame for $L^2 (G')$ over $\Delta$, then both the canonical dual atom and the canonical tight atom associated to $g$ are also in $S_0 (G')$. We do this without the use of periodization techniques from Gabor analysis.
\end{abstract}

\section{Introduction}
The primary focus of this article is the concept of spectral invariance. In short, if $\A$ is a $*$-subalgebra of a Banach $*$-algebra $\B$, then $\A$ is said to be \emph{spectrally invariant} in $\B$ if $\sigma_{\A} (a) = \sigma_{\B}(a)$ for all $a \in \A$, where $\sigma_{\A}(a)$ denotes the spectrum of the element $a$ in the algebra $\A$, and likewise for $\sigma_{\B}(a)$. In particular, if $\A$ and $\B$ are both unital with common unit, and if $a\in \A$ is invertible in $\B$, spectral invariance of $\A$ in $\B$ tells us that $a^{-1} \in \A$ as well. Spectral invariance of Banach $*$-algebras in $C^*$-algebras is a concept that has been extensively studied and is of importance in a number of different mathematical fields. Due to the seminal paper \cite{Gelfand39-1} the study of spectral invariance has been linked to Wiener's lemma, and variations of this result. As fields where spectral invariance is of importance we mention the theory of noncommutative tori \cite{Connes80, grle04}, Gabor analysis and window design in the theory of Gabor frames \cite{grle04}, convolution operators on locally compact groups \cite{Barnes90, FenGrLe06, FenGrLeLu03}, infinite-dimensional matrices \cite{Baskakov90, GoKaWo89, Kurbatov90, Sjostrand95}, and the theory of pseudodifferential operators \cite{Gr06pedestrian, Gr06sjostrand, GrSt07, Sjostrand95}. This list is by no means exhaustive. For an introduction to these variations on spectral invariance and Wiener's lemma we refer the reader to \cite{Gr10wiener}.

The main motivations for this article are the uses of spectral invariance in noncommutative geometry \cite{co94-1} and in Gabor analysis \cite{grle04}. Indeed, the original motivation for this article was to prove an extension of the main result of the latter article in the case of closed cocompact subgroups of the phase space of a locally compact abelian group without using periodization techniques from Gabor analysis. We do this in \cref{sec:application-Gabor}. Our focus will not be on general $*$-subalgebras of Banach $*$-algebras. Instead we will limit ourselves to a subclass of all twisted convolution algebras of locally compact groups where the twist is implemented by a continuous $2$-cocycle, see \cref{def:2-cocycle}. Given such a locally compact group $G$ and a continuous $2$-cocycle $c$, the resulting twisted convolution algebra will be denoted $L^1 (G,c)$. Given a faithful $*$-representation $\pi \colon L^1 (G,c) \to \mathbb{B}(\mathcal{H})$ for some Hilbert space $\mathcal{H}$, we wish to find conditions on $G$ and $\pi$ that guarantee that $\sigma_{L^1 (G,c)}(f) = \sigma_{\mathbb{B}(\mathcal{H})}(\pi (f))$ for all $f \in L^1 (G,c)$, i.e.\ that $L^1 (G,c)$ is spectrally invariant in $\mathbb{B}(\mathcal{H})$. Key to our approach to this problem is the use of the Mackey group $G_c$ associated to the locally compact group $G$ and the continuous $2$-cocycle $c$, and we define this group in \cref{subsec:obtaining-C*-algs}. Note that in general $L^1 (G,c)$ and $L^1 (G_c)$ are not isomorphic as Banach $*$-algebras. It will be of importance to us that the convolution algebra $L^1 (G_c)$ is symmetric, which in short means that the positive elements of the Banach $*$-algebra $L^1 (G_c)$ have positive spectra, see \cref{defn:symmetric-banach-alg}. We then  apply a result of Hulanicki \cite{Hula72}, stated for the reader's convenience in \cref{thm:Hulanicki-result}, to prove prove the main result of the article. 


Due to the use of the result of Hulanicki, the argument for spectral invariance will depend on a norm condition on self-adjoint elements. This norm condition may be difficult to check in practice, so we describe a class of groups for which the condition is automatically satisfied. This leads us to $C^*$-unique groups, introduced by Boidol \cite{Boidol84}. In short, a locally compact group $G$ is $C^*$-unique if its convolution algebra $L^1 (G)$ has a unique $C^*$-norm. A Banach $*$-algebra admitting a faithful $*$-representation is called $C^*$-unique if it has a unique $C^*$-completion. Examples of $C^*$-unique groups are semidirect products of abelian groups, connected metabelian groups, as well as groups where every compactly generated subgroup is of polynomial growth \cite[p.\ 224]{Boidol84}. We may now state the article's main theorem. 

\begin{thmx}[\cref{thm:main-thm}]\label{thm:TheoremA}
	Let $G$ be a locally compact group with a continuous $2$-cocycle $c$.
	\begin{enumerate}
	    \item [i)] If $L^1 (G_c)$ is $C^*$-unique, so is $L^1 (G,c)$. 
	    \item [ii)] If $L^1 (G_c)$ is symmetric and $C^*$-unique and $\pi \colon L^1 (G,c)\to \mathbb{B}(\mathcal{H})$ is a faithful $*$-representation, then $f \mapsto \Vert \pi (f)\Vert_{\mathbb{B}(\mathcal{H})}$, $f\in L^1 (G,c)$, is the full $C^*$-norm on $L^1 (G,c)$, and $\sigma_{L^1 (G,c)}(f) = \sigma_{\mathbb{B}(\mathcal{H})}(\pi(f))$ for all $f \in L^1 (G,c)$.
	\end{enumerate}
\end{thmx}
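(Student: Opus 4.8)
The plan is to transport both statements along the canonical surjective $*$-homomorphism $\Phi\colon L^1(G_c)\to L^1(G,c)$ from \cref{subsec:obtaining-C*-algs}, which integrates out the central circle variable against the defining character and identifies $L^1(G,c)$ with the quotient $L^1(G_c)/\ker\Phi$ (equivalently, with the ideal cut out by the central projection attached to that character). For (i), first note that $L^1(G,c)$ admits a faithful $*$-representation (the twisted left regular representation), so that its enveloping $C^*$-norm is genuinely a norm and $C^*$-uniqueness is meaningful. The $C^*$-norms on the quotient $L^1(G,c)=L^1(G_c)/\ker\Phi$ correspond bijectively to the $C^*$-seminorms on $L^1(G_c)$ whose null space is exactly $\ker\Phi$: a faithful $*$-representation $\rho$ of $L^1(G,c)$ produces the $C^*$-seminorm $F\mapsto\|\rho(\Phi F)\|$ on $L^1(G_c)$, and conversely every such seminorm descends to a $C^*$-norm on the quotient. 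One then shows, using that $L^1(G_c)$ is $C^*$-unique together with the decomposition of its $C^*$-completion along the characters of the central $\mathbb{T}$, that there is only one such seminorm, i.e.\ that $L^1(G,c)$ is $C^*$-unique.

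For (ii), part (i) already yields that $L^1(G,c)$ is $C^*$-unique, so the $C^*$-norm induced by any faithful $*$-representation $\pi$ is the unique, hence full, $C^*$-norm on $L^1(G,c)$; this is the first assertion. For the spectral statement I would invoke \cref{thm:Hulanicki-result} applied to the dense continuous inclusion $L^1(G,c)\hookrightarrow\overline{\pi(L^1(G,c))}\subseteq\mathbb{B}(\mathcal{H})$, which reduces the problem to checking that $\|\pi(f)\|_{\mathbb{B}(\mathcal{H})}=r_{L^1(G,c)}(f)$ for every self-adjoint $f$, where $r_{L^1(G,c)}$ denotes the spectral radius in $L^1(G,c)$. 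To obtain this, observe that $L^1(G,c)$ is symmetric: for $F\in L^1(G_c)$ the spectrum of $\Phi(F)^*\Phi(F)=\Phi(F^*F)$ in the quotient is contained in $\sigma_{L^1(G_c)}(F^*F)\cup\{0\}\subseteq[0,\infty)$, since $L^1(G_c)$ is symmetric and $*$-homomorphisms only shrink spectra (up to the point $0$). By Shirali--Ford, symmetry is equivalent to Hermitianness, so for self-adjoint $f$ the Pt\'ak functional $r_{L^1(G,c)}(f^*f)^{1/2}$ equals $r_{L^1(G,c)}(f)$ and is the largest $C^*$-seminorm on $L^1(G,c)$; combined with the first assertion of (ii) this gives $\|\pi(f)\|_{\mathbb{B}(\mathcal{H})}=r_{L^1(G,c)}(f)$, as required. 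Finally, $\sigma_{\overline{\pi(L^1(G,c))}}(\pi(f))=\sigma_{\mathbb{B}(\mathcal{H})}(\pi(f))$ because a $C^*$-subalgebra is spectrally invariant in $\mathbb{B}(\mathcal{H})$ (with the usual unitization convention for non-unital algebras), which gives the stated equality of spectra.

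I expect the genuine difficulty to lie in (i): transferring $C^*$-uniqueness through the Mackey quotient is not a formal consequence of the existence of a surjective $*$-homomorphism, and requires controlling how $C^*$-seminorms on $L^1(G_c)$ behave with respect to the central $\mathbb{T}$-action and the ideal $\ker\Phi$ — in contrast to symmetry, which descends to quotients essentially for free. Once (i) is established, the remainder of (ii) is a routine assembly of Shirali--Ford, the Pt\'ak functional calculus for symmetric Banach $*$-algebras, and Hulanicki's theorem.
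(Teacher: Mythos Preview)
Your outline for (ii) is essentially correct and arguably cleaner than the paper's route: once (i) is in hand, showing that symmetry of $L^1(G_c)$ descends to the quotient $L^1(G,c)$ and then invoking Pt\'ak/Shirali--Ford (or, equivalently, the equivalence of symmetry with spectral invariance in the enveloping $C^*$-algebra) does verify the hypothesis of \cref{thm:Hulanicki-result}. The paper instead works through the isometric embedding $j\colon L^1(G,c)\to L^1(G_c)$, $j(f)(x,\tau)=\tau f(x)$, and compares spectral radii along $j$ via \cref{lemma:spectral-radius-for-1-2}; the two routes are interchangeable at this stage.

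The real gap is in (i), and it is exactly the one you flag but do not close. First, a minor point: no surjection $\Phi$ is defined in \cref{subsec:obtaining-C*-algs}; the paper goes the other direction with $j$. More substantively, $C^*$-uniqueness of $L^1(G_c)$ constrains only $C^*$-\emph{norms}, i.e.\ norms arising from \emph{faithful} representations. Your pull-back $\rho\circ\Phi$ is not faithful on $L^1(G_c)$, so $C^*$-uniqueness says nothing about it directly. The central-character decomposition of $C^*(G_c)$ lets you factor $\rho\circ\Phi$ through $C^*(G_c)/\overline{\ker\Phi}\cong C^*(G,c)$, but that yields only the inequality $\|\rho(f)\|\le\|f\|_{C^*(G,c)}$: a faithful representation of the dense subalgebra $L^1(G,c)$ need not extend faithfully to $C^*(G,c)$, and that extension property is precisely the $C^*$-uniqueness of $L^1(G,c)$ you are trying to prove.

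The paper supplies the missing idea. The Fourier decomposition in the $\T$-variable is \emph{multiplicative} already at the $L^1$-level (\cref{prop:convolution-and-involution-identities}, \cref{corl:graded-subspace-iso}): $L^1(G_c)$ splits as a direct sum of two-sided ideals $L^1(G_c)_n\cong L^1(G,c^n)$. One can therefore assemble a \emph{faithful} representation $\tilde\pi$ of $L^1(G_c)$ by placing the given $\pi$ on the $n=1$ summand and the twisted left regular representations $L^{c^n}$ on all others. The crucial feature is that $\tilde\pi\circ j$ equals $\pi$ on the nose, so $\|\tilde\pi(j(f))\|=\|\pi(f)\|$ exactly (not merely $\ge$); now $C^*$-uniqueness of $L^1(G_c)$ applies to $\tilde\pi$ and pins this quantity down independently of $\pi$. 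Your parenthetical ``ideal cut out by the central projection'' is morally this picture, but the construction of $\tilde\pi$---equivalently, the multiplicativity in \cref{prop:convolution-and-involution-identities}---is the step that must actually be carried out.
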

Though there are some known examples of $C^*$-unique groups, there are very few statements in the literature concerning the $C^*$-uniqueness of twisted convolution algebras. This is why we go via the convolution algebra of the Mackey group $G_c$, and why statement i) is of independent interest. Note also that for all unital Banach $*$-algebras, being symmetric is equivalent to being spectrally invariant in the enveloping $C^*$-algebra, see for example \cite[p.\ 340]{Li12}. 

Important to our proof of the main theorem is the observation that convolution in $L^1 (G_c)$ can be expressed in terms of convolution in the algebras $L^1 (G,c^n)$, $n \in \Z$, where $c^n$ is the $2$-cocycle $c$ raised to the nth power, see \cref{prop:convolution-and-involution-identities}. As an immediate consequence, $L^1 (G_c)$ can be decomposed in terms of the subalgebras $L^1 (G,c^n)$ as in \cref{corl:graded-subspace-iso}, and this allows us to extend a faithful $*$-representation of $L^1 (G,c)$ to a faithful $*$-representation of $L^1 (G_c)$ in the proof of \cref{thm:main-thm}. This is truly the crucial step in the proof. 

Using our main theorem we are able to give a short proof on a problem concerning regularity of canonical dual atoms and canonical tight atoms in Gabor analysis. We will do this by restating the problem in operator algebraic terms and then use \cref{thm:main-thm}. Exploring the interplay between Gabor analysis and operator algebras has gained much popularity in recent years \cite{AuEn20, AuJaLu19, Enstad19, jalu18duality, Kreisel16, lu09, lu11}. The field of Gabor analysis has its origins in the seminal paper of Gabor \cite{Ga46}, where he claimed that it is possible to obtain basis-like representations of functions in $L^2(\R)$ in terms of the set $\{ e^{2\pi i lx} \phi(x- k) : k,l \in \Z \}$, where $\phi$ denotes a Gaussian. A central problem of the field is still to find basis-like expansions of functions in terms of time-frequency shifts of the form \eqref{eq:tf-shift}. Although most research in this field is done on one or several real variables, it is possible, due to the nature of time-frequency shifts, to study Gabor analysis on phase spaces of locally compact abelian groups \cite{gr98}. Let $G$ be a locally compact abelian group. Then its phase space is the group $\tfp{G}$, where $\widehat{G}$ is its Pontryagin dual. Let $\pi (z)$ be a time-frequency shift of the form \eqref{eq:tf-shift} for some point $z = (x,\omega)\in \tfp{G}$. Ignoring normalizations on the relevant Haar measures for the time being, one may then consider a closed, cocompact subgroup $\Sub \subseteq \tfp{G}$ and a function $g \in L^2 (G)$ and ask when a set 
$\GS(g;\Sub) :=(\pi(z)g)_{z\in \Sub}$ is a frame for $L^2 (G)$, i.e.\ when there exist constants $C,D>0$ for which
\begin{equation*}
    C\Vert f\Vert_2^2 \leq \int_{\Sub} \vert \hs{f}{\pi(z)g}\vert^2 \dif z \leq D \Vert f \Vert_2^2
\end{equation*}
holds for all $f \in L^2 (G)$, where $\dif z$ is the chosen Haar measure on $\Sub$. The reason for assuming that $\Sub$ is cocompact will be explained in \cref{rmk:cocompact-necessary}. In time-frequency analysis it is often also of interest that the Gabor atom $g$ has good time-frequency decay. One way of expressing good time-frequency decay is to say that $g$ is in Feichtinger's algebra $S_0 (G)$, see \eqref{eq:def-feichtinger-alg}. 

Equivalent to $\GS(g;\Sub)$ being a Gabor frame for $L^2 (G)$ is the invertibility of the frame operator $S\colon L^2 (G) \to L^2 (G)$ associated to $\GS(g;\Sub)$. The form of $S$ most suitable for our purposes is given in \eqref{eq:frame-op-rewritten}. Two functions of interest are then the canonical dual atom of $g$, which is $S^{-1}g$, and the canonical tight atom associated to $g$, which is $S^{-1/2}g$. They are of importance in Gabor analysis since they allow for perfect reconstuction formulas for all functions in $L^2 (G)$ in terms of $g, S^{-1}g$, and $S^{-1/2}g$, as illustrated by \eqref{eq:dual-atom-reconstruction} and \eqref{eq:tight-atom-reconstruction}. If $g\in S_0 (G)$ generates a frame $\GS (g;\Sub)$ for $L^2 (G)$, a natural question in Gabor analysis is then whether $S^{-1}g$ and $S^{-1/2}g$ are in $S_0 (G)$ also. This leads us to our second main result.
\begin{thmx}[\cref{thm:continuous-frame-inverse-regularity}]
	Let $\Sub \subseteq \tfp{G}$ be a closed cocompact subgroup, and suppose $g \in S_0 (G)$ is such that $\GS(g;\Sub)$ is a Gabor frame for $L^2 (G)$. Then $S^{-1}g, S^{-1/2}g \in S_0 (G)$ as well. 
\end{thmx}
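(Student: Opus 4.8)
The plan is to realise the Gabor frame operator $S$ as the image $\pi_{\Subo}(a)$ of an element $a$ of a twisted $\ell^1$-algebra to which \cref{thm:TheoremA} applies, to conclude from spectral invariance that $a$ is invertible (and has an invertible square root) already inside that algebra, and then to transfer the resulting absolutely convergent expansions back into $S_0 (G)$. Let $\Subo \subseteq \tfp{G}$ denote the adjoint subgroup of $\Sub$, i.e.\ the set of $z \in \tfp{G}$ whose time-frequency shift $\pi(z)$ commutes with $\pi(w)$ for every $w \in \Sub$. Since $\Sub$ is closed and cocompact, $\Subo$ is a \emph{discrete} closed subgroup of $\tfp{G}$, and the restriction to $\Subo$ of (a conjugate of) the Heisenberg $2$-cocycle on $\tfp{G}$ is a continuous $2$-cocycle, which we call $c$. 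The first and central step is to show that
\[ S = \pi_{\Subo}(a), \]
where $\pi_{\Subo} \colon \ell^1 (\Subo, c) \to \mathbb{B}(L^2 (G))$ is the integrated form of the projective representation $z \mapsto \pi(z)$ of $\Subo$, a \emph{faithful} $*$-representation, and $a \in \ell^1 (\Subo, c)$ is, up to a normalising constant, the restriction to $\Subo$ of the ambiguity function $V_g g(z) = \hs{g}{\pi(z)g}$. Equivalently, in the Heisenberg module picture of the Gabor situation \cite{jalu18duality, AuEn20}, $S$ is the action on $g$ of the element $a = {}_\bullet\langle g,g\rangle$ of the twisted group algebra of $\Subo$, with $\ell^1 (\Subo, c)$ the natural pre-$C^*$-algebra acting on the $S_0 (G)$-level of that bimodule. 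The point of this route is that the identity $S = \pi_{\Subo}(a)$ comes from the commutation theorem $\pi(\Sub)' = \pi(\Subo)''$ together with the module structure, rather than from the Janssen representation of $S$, so periodization is avoided. Finally, $a$ does lie in $\ell^1 (\Subo)$: since $g \in S_0 (G)$ its ambiguity function $V_g g$ lies in $S_0 (\tfp{G})$, and Feichtinger's algebra restricts to $S_0 (\Subo) = \ell^1 (\Subo)$ on the discrete subgroup $\Subo$.

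Next I would check the hypotheses of \cref{thm:TheoremA}. The Mackey group $(\Subo)_c$ is a central extension of the discrete abelian group $\Subo$ by $\T$, hence an (at most two-step) nilpotent locally compact group; every compactly generated subgroup of such a group has polynomial growth, so $L^1 ((\Subo)_c)$ is $C^*$-unique by Boidol's criterion \cite{Boidol84} recalled in the introduction, and it is symmetric, since nilpotent locally compact groups have symmetric group algebras. Thus part~ii) of \cref{thm:TheoremA} applies to $\pi_{\Subo}$, and $\ell^1 (\Subo, c)$ is spectrally invariant in $\mathbb{B}(L^2 (G))$, so $\sigma_{\ell^1 (\Subo,c)}(a) = \sigma_{\mathbb{B}(L^2 (G))}(S)$. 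The hypothesis that $\GS(g;\Sub)$ is a frame for $L^2 (G)$ is precisely that $S$ is positive and invertible, with $\sigma_{\mathbb{B}(L^2 (G))}(S) \subseteq [C,D] \subseteq (0,\infty)$. As $\Subo$ is discrete, $\ell^1 (\Subo, c)$ is unital, so spectral invariance gives $a^{-1} \in \ell^1 (\Subo, c)$ with $\pi_{\Subo}(a^{-1}) = S^{-1}$; and since $\zeta \mapsto \zeta^{-1/2}$ (principal branch) is holomorphic on a neighbourhood of $\sigma_{\ell^1 (\Subo,c)}(a) \subseteq (0,\infty)$, the holomorphic functional calculus in $\ell^1 (\Subo, c)$ produces $a^{-1/2} \in \ell^1 (\Subo, c)$, and naturality of that calculus under the continuous unital homomorphism $\pi_{\Subo}$ gives $\pi_{\Subo}(a^{-1/2}) = S^{-1/2}$.

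It remains to transfer these expansions back to $S_0 (G)$. For any $b \in \ell^1 (\Subo, c)$ we have $\pi_{\Subo}(b)g = \sum_{z \in \Subo} b(z)\,\pi(z)g$, and since each $\pi(z)$ acts isometrically on $S_0 (G)$,
\[ \sum_{z \in \Subo} \vert b(z)\vert\, \Vert \pi(z)g\Vert_{S_0 (G)} = \Vert b\Vert_{\ell^1 (\Subo)}\, \Vert g\Vert_{S_0 (G)} < \infty , \]
so the series converges absolutely in the Banach space $S_0 (G)$ and $\pi_{\Subo}(b)g \in S_0 (G)$. Applying this with $b = a^{-1}$ and with $b = a^{-1/2}$ yields $S^{-1}g, S^{-1/2}g \in S_0 (G)$, which is the assertion.

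The step I expect to be the main obstacle is the first: proving $S = \pi_{\Subo}(a)$ with $\pi_{\Subo}$ faithful on $\ell^1 (\Subo, c)$, and doing so without periodization. This needs the Heisenberg module / duality framework for an arbitrary closed cocompact $\Sub \subseteq \tfp{G}$ — in particular the commutation theorem relating $\pi(\Sub)$ to $\pi(\Subo)$, and the identification of the $\ell^1 (\Subo)$-valued inner product on $S_0 (G)$ — in place of the classical Janssen representation, which is where Poisson summation would normally enter. The remaining ingredients (the restriction property of $S_0$, symmetry and $C^*$-uniqueness of nilpotent locally compact groups, and the functional calculus step for the tight atom) are routine.
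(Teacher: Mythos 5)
Your proposal is correct and is essentially the paper's own argument: write $S$ as the image of the restricted ambiguity function under the faithful integrated representation of $\ell^1(\Subo,\cbar)$, verify $C^*$-uniqueness and symmetry of the Mackey group of $\Subo$ via polynomial growth and nilpotency, invoke \cref{thm:main-thm} for spectral invariance, use holomorphic functional calculus for $S^{-1/2}$, and transfer back via the $S_0(G)$-invariance of the $\ell^1$-action. The only cosmetic difference is in step one, where the paper obtains the identity $S=\pi^*\bigl((\hs{\pi(w)g}{g})_{w\in\Subo}\bigr)$ directly from the Fundamental Identity of Gabor Analysis (Rieffel's Proposition 2.11, adapted to $S_0$), which is the same duality fact you propose to extract from the Heisenberg-module/commutation framework.
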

We note that the above result was proved in the case of separable lattices in $\R^{2d}$, and claimed to hold more generally for lattices in phase spaces of locally compact abelian groups, in the celebrated paper \cite{grle04}. Though it is somewhat technical to prove \cref{thm:main-thm}, our approach to \cref{thm:continuous-frame-inverse-regularity} presented below makes it simple to prove the extension of the main result of \cite{grle04} for general closed cocompact subgroups rather than just lattices. It may be possible to adapt the proof from \cite{grle04} to this setting as well, but we offer an independent proof which makes no use of periodization techniques available in the setting of Gabor analysis.

As mentioned, to prove \cref{thm:continuous-frame-inverse-regularity} we will restate the problem in operator algebraic language. For a Gabor frame $\GS (g;\Sub)$ with $g \in S_0 (G)$, the frame operator $S$ can be rephrased in terms of a faithful (right) $*$-representation of the Banach $*$-algebra $\ell^1 (\Subo, \cbar)$, where $\Subo$ is the adjoint lattice of $\Sub$ and $c$ is the Heisenberg $2$-cocycle, see \eqref{eq:adjoint-subgroup} and \eqref{eq:heisenberg-cocycle}. As we explain in the proof of \cref{thm:continuous-frame-inverse-regularity}, locally compact abelian groups are $C^*$-unique and for any continuous $2$-cocycle on them the associated Mackey group $G_c$ is also $C^*$-unique. In addition, $L^1 (G_c) $ will in this case be symmetric. Hence we may apply \cref{thm:main-thm} to obtain our second main result.

The article is organized as follows. \cref{sec:twisted-conv-algs} is dedicated to revising some results on how we obtain twisted convolution algebras and $C^*$-algebras through projective unitary representations of locally compact groups, as well as some results on symmetric convolution algebras and $C^*$-unique groups. Our first main result is \cref{thm:main-thm}, and most of \cref{sec:spec-inv} is dedicated to the proof of this theorem, though some results are of independent interest. In \cref{sec:application-Gabor} we rephrase a problem in Gabor analysis in terms of a faithful $*$-representation of a twisted convolution algebra, and apply \cref{thm:main-thm} to obtain a simple proof of the main result of this section, \cref{thm:continuous-frame-inverse-regularity}.

\section{Twisted convolution algebras}\label{sec:twisted-conv-algs}
\subsection{Projective unitary representations and twisted convolution algebras}\label{subsec:obtaining-C*-algs}
We dedicate this section to explaining how we obtain twisted convolution algebras from projective unitary representations of locally compact groups. Associated to a given projective unitary representation is a continuous $2$-cocycle, defined as follows. 

\begin{defn}\label{def:2-cocycle}
Let $G$ be a locally compact group. By a \emph{continuous $2$-cocycle} for $G$ we mean a continuous map $c : G \times G \to \T$ satisfying
\begin{equation}\label{eq:2-cocycle-assoc}
        c (x_1 , x_2) c(x_1 x_2 , x_3) = c(x_1, x_2 x_3) c(x_2, x_3)
\end{equation}
and
\begin{equation}\label{eq:2-cocycle-unit-prop}
    c(x,e) = c(e,x) = 1
\end{equation}
for all $x,x_1,x_2,x_3 \in G$, and where $e$ is the unit of $G$.
\end{defn}

For completeness we record the following results which follow directly from \cref{def:2-cocycle}.
\begin{lemma}\label{lemma:useful-cocycle-props}
For a continuous $2$-cocycle $c$ for a locally compact group $G$ we have
\begin{enumerate}
    \item [i)] For any $n \in \Z$, the map $c^n : G \times G \to \T$ given by
    \begin{equation*}
        c^n(x_1,x_2) = (c(x_1,x_2))^n, \quad x_1,x_2 \in G,
    \end{equation*}
    is also a continuous $2$-cocycle.
    \item [ii)] For all $x \in G$ we have
    \begin{equation*}\label{eq:2-cocycle-elts-and-inverse}
    c(x,x^{-1}) = c(x^{-1},x).
    \end{equation*}
    \item [iii)] For all $x,y \in G$ we have
    \begin{equation}\label{eq:2-cocycle-reduction}
        c(y,y^{-1}) \overline{c(y^{-1},x)} = c(y,y^{-1}x).
    \end{equation}
\end{enumerate}
\end{lemma}
\begin{proof}
Statement i) is obvious. Statement ii) follows by setting $x_1=x_3 = x$ and $x_2 = x^{-1}$ in \eqref{eq:2-cocycle-assoc} and then using \eqref{eq:2-cocycle-unit-prop}. For iii) we may equivalently show that 
\begin{equation*}
    c(y,y^{-1}) c(yy^{-1}, x) = c(y,y^{-1}x) c(y^{-1},x)
\end{equation*}
since $c(yy^{-1}, x)=1$. Setting $x_1 = y$, $x_2 = y^{-1}$ and $x_3 = x$ in \eqref{eq:2-cocycle-assoc} and then using \eqref{eq:2-cocycle-unit-prop} we obtain the result.
\end{proof}
The continuous $2$-cocycles defined above are indeed part of a cohomology theory for groups. 
This is however not something we shall have much use for in the sequel.

As alluded to above, the notion of continuous $2$-cocycles comes up very naturally when talking about projective unitary representations of groups. A \emph{projective unitary representation} of a locally compact group $G$ is a strongly continuous map $\pi \colon G \to \U(\mathcal{H})$ satisfying
\begin{equation*}
\begin{split}
    \pi(e) &= \Id_{\mathcal{H}}\\
    \pi (x_1) \pi (x_2) &= c(x_1,x_2) \pi (x_1 x_2)
\end{split}
\end{equation*}
for $x_1, x_2 \in G$, where $e$ is the unit of $G$, and where $c : G \times G \to \T$ is a priori just some continuous map. Here $\U(\mathcal{H})$ denotes the unitary operators on some Hilbert space $\mathcal{H}$. Associativity then yields that $c$ must satisfy \eqref{eq:2-cocycle-assoc}, and since $\pi (e) = \Id_{\mathcal{H}}$, we also get \eqref{eq:2-cocycle-unit-prop}. 
Hence $c$ is a continuous $2$-cocycle for $G$. A projective unitary representation of $G$ where the projectivity is governed by $c$ as above will be called a \emph{$c$-projective unitary representation of $G$}. If $c =1$ we just call them unitary representations. 

Given a locally compact group $G$ and a continuous $2$-cocycle $c$, we can construct an associated group $G_c$. As a topological space, $G_c$ is just $G \times \T$ with the product topology (where $\T$ has its usual topology and group structure as $\U(\C)$). However, the group structure is not in general the simple direct product of the respective group structures. Instead we set
\begin{equation}\label{eq:mackey-group-law}
    (x_1, \tau_1) (x_2, \tau_2) = (x_1 x_2 , \tau_1 \tau_2 \overline{c(x_1,x_2)}).
\end{equation}
This is indeed a group. The identity is given by $(e,1)$, and the inverse of an element $(x,\tau) \in G_c$ is given by
\begin{equation*}\label{eq:mackey-inverse}
    (x,\tau)^{-1} = (x^{-1} , \overline{\tau} c(x^{-1},x)).
\end{equation*}
With the product topology and group law of \eqref{eq:mackey-group-law} $G_c$ becomes a locally compact group we call the \emph{Mackey group}. It is well known that the resulting Haar measure is just the product measure of the measures on $G$ and $\T$, and hence the modular function on $G_c$ may be identified with the modular function on $G$. We will in the sequel normalize the measure on $\T$ such that $\mu (\T)=1$, where $\mu$ is the Haar measure.

The usefulness of the Mackey group for us is in the fact that $c$-projective unitary representations of $G$ induce unitary representations of $G_c$. This is done by sending a $c$-projective unitary representation of $G$, say $\pi \colon G \to \U (\mathcal{H})$ for some Hilbert space $\mathcal{H}$, to $\pi_c \colon G_c \to \U (\mathcal{H})$, where
\begin{equation}\label{eq:induced-rep-on-Mackey}
\pi_c (x,\tau) = \overline{\tau} \pi (x)
\end{equation}
for $(x,\tau)\in G_c$.

Given a locally compact group $G$ and a continuous $2$-cocycle $c$ for $G$, there is always a distinguished $c$-projective unitary representation of $G$. Let $L^2 (G)$ denote the square-integrable measurable functions on $G$. The distinguished $c$-projective unitary representation is called the \emph{$c$-twisted left regular representation of $G$}, and it is the map $L^c \colon G \to \U( L^2 (G))$ given by
\begin{equation*}\label{eq:twisted-left-regular}
	L^c_y f(x) = c(y,y^{-1}x) f(y^{-1}x), \quad \text{$x,y\in G$, $f \in L^2(G)$.}
\end{equation*}
If $c=1$ we drop the $c$ from the notation and just write $L_y$ for $y \in G$. 

We proceed to introduce twisted convolution algebras of these groups and show how we may complete them to $C^*$-algebras. For a locally compact group $G$ with modular function $m$, we consider the space of measurable and integrable functions $L^1 (G)$. For a continuous $2$-cocycle $c$ for $G$ we define $c$-twisted convolution on $L^1 (G)$ by
\begin{equation*}\label{eq:twisted-conv}
    f_1 \natural_c f_2 (x) = \int_G f_1 (y) f_2 (y^{-1} x) c(y,y^{-1}x) 
    \dif y,
\end{equation*}
for $f_1,f_2 \in L^1 (G)$, where $\dif y$ is the Haar measure on $G$. Should $f_2 \in L^p (G)$, $p\in [1,\infty]$ we will use the same notation. We also define the $c$-twisted involution 
\begin{equation*}\label{eq:twisted-inv}
    f^{*_c}(x) = m(x^{-1}) \overline{c(x,x^{-1}) f(x^{-1})}
\end{equation*}
for $f \in L^1 (G)$. With these operations $(L^1 (G), \natural_c, {}^{*_c})$ becomes a Banach $*$-algebra with the usual $\Vert \cdot \Vert_1$-norm on $L^1 (G)$. From now on we will just write $L^1 (G,c)$ instead of $(L^1 (G), \natural_c, {}^{*_c})$ to ease notation.

Any $c$-projective unitary representation $\pi \colon G \to \U(\mathcal{H})$ now induces a nondegenerate $*$-representation of the Banach $*$-algebra $L^1 (G,c)$ as bounded operators on $\mathcal{H}$. By slight abuse of notation we will denote the induced $*$-representation by $\pi$ also. For $f \in L^1 (G)$ and $\eta \in \mathcal{H}$ we have
\begin{equation*}\label{eq:L1-integrated-rep}
    \pi (f) \eta = \int_G f(x) \pi (x)\eta \dif x ,
\end{equation*}
where we interpret the integral weakly. Note that $\Vert \pi (f) \Vert \leq \Vert f \Vert_{L^1 (G)}$. If the integrated representation $\pi$ is faithful this gives us a way of completing $L^1 (G,c)$ to a $C^*$-algebra, namely for any $f \in L^1 (G)$ we set
\begin{equation*}\label{eq:C*-norm-completion}
    \Vert f \Vert := \Vert \pi (f) \Vert_{\mathbb{B}(\mathcal{H})}.
\end{equation*}
The integrated representation of the $c$-twisted left regular representation will be denoted by $f \mapsto L^c_f$. 
The following result, which will be important for us in the proof of \cref{thm:main-thm}, is a special case of \cite[Satz 6]{Leptin68}.
\begin{prop}\label{prop:Leptin-twisted-result}
    Let $G$ be an amenable locally compact group with a continuous $2$-cocycle $c$. Then $f \mapsto \Vert L^c_f \Vert_{\mathbb{B}(L^2 (G))}$ is the maximal $C^*$-norm on $L^1 (G,c)$. 
\end{prop}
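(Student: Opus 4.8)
This is \cite[Satz 6]{Leptin68}, and the argument I would give reduces the twisted statement to the classical theorem that, for an amenable locally compact group, the left regular representation is weakly contained in \emph{every} unitary representation. First, nondegenerate $*$-representations of $L^1(G,c)$ are exactly the integrated forms of $c$-projective unitary representations $\pi$ of $G$ — the twisted analogue of the usual correspondence, which can itself be read off from the Mackey group. Since $\Vert\pi(f)\Vert\le\Vert f\Vert_1$ for every such $\pi$, the maximal $C^*$-norm $\Vert f\Vert_{\max}:=\sup_\pi\Vert\pi(f)\Vert$ is well defined, and as $f\mapsto L^c_f$ is one such representation (indeed a faithful one), $\Vert L^c_f\Vert\le\Vert f\Vert_{\max}$ comes for free. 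The content is the reverse inequality $\Vert\pi(f)\Vert\le\Vert L^c_f\Vert$ for every $\pi$, i.e.\ that each $\pi$ is weakly contained in $L^c$.

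To prove this I would pass to the Mackey group. The quotient map $(x,\tau)\mapsto x$ exhibits $G_c$ as an extension of the amenable group $G$ by the compact group $\T$, so $G_c$ is amenable; hence its left regular representation $\lambda$ on $L^2(G_c)$ weakly contains every unitary representation of $G_c$, in particular the representation $\pi_c$ attached to $\pi$ via \eqref{eq:induced-rep-on-Mackey}, giving $\pi_c\prec\lambda$. Now $\T\cong\{(e,\tau):\tau\in\T\}$ is central in $G_c$, so Fourier analysis over $\widehat{\T}\cong\Z$ decomposes $\lambda\cong\bigoplus_{n\in\Z}\lambda_n$, with the central circle acting on $\lambda_n$ by $\tau\mapsto\overline{\tau}^n$ and the orthogonal projection onto the $n=1$ summand equal to the central projection $\int_\T\tau\,\lambda(e,\tau)\,\dif\tau$ (recall $\mu(\T)=1$). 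A direct computation with the group law \eqref{eq:mackey-group-law}, the inverse formula in $G_c$, and the cocycle identity \eqref{eq:2-cocycle-reduction} identifies $\lambda_1$, as a representation of $G_c$, with the representation attached via \eqref{eq:induced-rep-on-Mackey} to the $c$-twisted left regular representation $L^c$. Since $\pi_c(e,\tau)=\overline{\tau}\,\Id$, the representation $\pi_c$ already sits in the $n=1$ block, and averaging matrix coefficients against $\tau\mapsto\tau$ over $\T$ — an operation preserving uniform convergence on compact sets and sending matrix coefficients of $\lambda$ to those of $\lambda_1$ — upgrades $\pi_c\prec\lambda$ to $\pi_c\prec\lambda_1$. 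Translating back through \eqref{eq:induced-rep-on-Mackey} gives $\pi\prec L^c$ as $c$-projective unitary representations of $G$, hence as $*$-representations of $L^1(G,c)$, which is the desired inequality.

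The only genuinely substantive ingredient is the classical fact that amenability of $G_c$ forces its regular representation to be weakly containing, which I would simply cite. The rest is bookkeeping: the equivalence between $*$-representations of $L^1(G,c)$ and $c$-projective unitary representations of $G$, the compatibility of weak containment with compression to the spectral subspace of a central compact subgroup, and getting the identities of \cref{lemma:useful-cocycle-props} exactly right when identifying $\lambda_1$ with the representation of $G_c$ coming from $L^c$. I expect that last cocycle computation, together with keeping the Haar-measure normalizations consistent throughout, to be the fiddliest point, though nothing here is deep.
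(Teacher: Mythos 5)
Your argument is correct; note, though, that the paper does not actually prove \cref{prop:Leptin-twisted-result} at all — it is quoted as a special case of Leptin's Satz 6 — so what you propose is a self-contained proof where the paper only offers a citation. The individual steps check out: $\{(e,\tau):\tau\in\T\}$ is central in $G_c$ because $c(e,x)=c(x,e)=1$; $G_c$ is amenable since it is a compact (central) extension of the amenable group $G$; the Fourier decomposition of $L^2(G_c)$ in the $\T$-variable is invariant under the left regular representation $\lambda$ of $G_c$, the centre acts on the degree-$n$ subspace by $\overline{\tau}^n$, and on the degree-$1$ subspace the identity \eqref{eq:2-cocycle-reduction} identifies $\lambda_1$ with the representation attached to $L^c$ via \eqref{eq:induced-rep-on-Mackey}; and averaging matrix coefficients against $\tau\mapsto\tau$ over the central circle sends (sums of) positive definite functions of $\lambda$ to ones of $\lambda_1$ (because $P_1=\int_\T \tau\,\lambda(e,\tau)\,\dif\tau$ is a projection commuting with $\lambda$), fixes those of $\pi_c$, and preserves uniform convergence on compacta, so $\pi_c\prec\lambda$ does upgrade to $\pi_c\prec\lambda_1$. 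Two remarks. First, you lean on the correspondence between nondegenerate $*$-representations of $L^1(G,c)$ and $c$-projective unitary representations of $G$; this is standard but not completely formal (approximate identities, strong continuity), so it should be cited or proved rather than asserted. Second, the compression step can be bypassed entirely: from $\pi_c\prec\lambda$ one already has $\Vert\pi_c(F)\Vert\leq\Vert\lambda(F)\Vert$ for all $F\in L^1(G_c)$, and taking $F=j(f)$, computing $\pi_c(j(f))=\pi(f)$ and $\Vert L_{j(f)}\Vert_{\mathbb{B}(L^2(G_c))}=\Vert L^c_f\Vert_{\mathbb{B}(L^2(G))}$ — the latter being exactly the computation in the proof of \cref{lemma:spectral-radius-for-1-2}, valid for arbitrary $f$ since by \eqref{eq:convolution-identity} the convolution $j(f)*H$ depends only on $H_1$ — gives the desired inequality $\Vert\pi(f)\Vert\leq\Vert L^c_f\Vert$ directly, which would shorten your argument and sidestep the fiddliest bookkeeping you flag.
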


Instead of twisting the convolution algebra of the locally compact group $G$ by a continuous $2$-cocycle $c$, we could first ''twist'' the group $G$ by $c$ to obtain the associated Mackey group $G_c$, and then consider the associated convolution algebra. That is, we consider the space $L^1 (G_c)$ with convolution
\begin{equation*}
\begin{split}
    F_1 * F_2 (x,\tau) &= \int_{G_c} F_1 ((y,\xi)) F_2 ((y, \xi)^{-1} (x,\tau)) \dif \mu_{G_c} \\
    &= \int_G \int_{\T} F_1 ((y,\xi)) F_2 ((y^{-1}x, \overline{\xi}\tau c(y^{-1},y) \overline{c(y^{-1},x)})) \dif \xi \dif y
\end{split}
\end{equation*}
for $F_1, F_2 \in L^1 (G_c)$ and $(x,\tau)\in G_c$. The involution is given by
\begin{equation*}
    F^* (x,\tau) = m(x^{-1}) \overline{F((x,\tau)^{-1})} = m(x^{-1}) \overline{F((x^{-1}, \overline{\tau} c(x^{-1}, x))}
\end{equation*}
for $F \in L^1 (G_c)$ and $(x,\tau)\in G_c$, and where $m$ is the modular function for $G$. Any $c$-projective unitary representation of $G$ induces a unitary representation $\pi_c$ of $G_c$ by \eqref{eq:induced-rep-on-Mackey}, which in turn induces a nondegenerate $*$-representation $\pi_c$ of $L^1 (G_c)$. Note however that $\pi_c$ is in general not a faithful $*$-representation of $L^1 (G_c)$ even if $\pi_c$ is a faithful unitary representation of $G_c$. Indeed, let $f\in L^1 (G)\setminus \{0\}$ and define $F \in L^1 (G_c)$ by $F(x,\tau) = \overline{\tau}f(x)$. Then
\begin{equation*}
\begin{split}
    \pi_c (F) \eta &= \int_G \int_\T F(x,\tau) \pi_c (x,\tau) \eta \dif \tau \dif x \\
    &= \int_G \int_\T \overline{\tau} f(x) \overline{\tau} \pi(x) \eta \dif \tau \dif x 
    = \int_G \int_\T \overline{\tau}^2 f(x) \pi(x) \eta \dif \tau \dif x = 0,
\end{split}
\end{equation*}
for all $\eta \in \mathcal{H}$, even though $F$ is not the zero function.
\begin{rmk}\label{rmk:unitization-rep}
    Note that if $G$ is nondiscrete we may always extend a representation $\pi \colon L^1 (G,c) \to\mathbb{B}(\mathcal{H})$ to its minimal unitization $L^1 (G,c)^{\sim}$ by forcing the induced representation, also denoted $\pi$, to satisfy $\pi (1_{L^1 (G,c)^\sim}) = \Id_\mathcal{H}$. Indeed we will need to do this in the sequel. If $L^1 (G,c)$ is already unital it will always be implied that $\pi (1_{L^1 (G,c)}) = \Id_{\mathcal{H}}$.
\end{rmk}

\subsection{Symmetric group algebras and $C^*$-uniqueness}
Two concepts that will be of great importance when proving our main result \cref{thm:main-thm} are that of symmetric convolution algebras and $C^*$-uniqueness. We therefore dedicate this section to introducing these concepts along with some important results. In the sequel, if $\A$ is a $*$-algebra and $a \in \A$, we let $\sigma_\A (a)$ denote the spectrum of $a$ in the algebra $\A$.

\begin{defn}\label{defn:symmetric-banach-alg}
A Banach $*$-algebra $\A$ is called \emph{symmetric} if for all $a\in \A$ we have $\sigma_\A (a^*a) \subseteq [0,\infty)$. We will say that a locally compact group $G$ is \emph{symmetric} if $L^1 (G)$ is a symmetric Banach $*$-algebra. 
\end{defn}
Note that this is automatically satisfied in $C^*$-algebras \cite[Theorem 2.2.5]{Murphy90}. By definition of spectrum in a nonunital Banach $*$-algebra we have that nonunital $\A$ is symmetric if its minimal unitization $\Tilde{\A}$ is symmetric. Actually the converse is also true, i.e.\ $\Tilde{A}$ is symmetric if $\A$ is symmetric \cite[Theorem (4.7.9)]{Rickart60}.

Locally compact groups $G$ yielding symmetric (untwisted) convolution algebras $L^1 (G)$ are of importance due to the following result shown in \cite[Theorem 2.8]{grle04} (though noted several times earlier). Note that we can omit the condition that $G$ should be amenable, as it was recently shown that if $L^1 (G)$ is symmetric, then $G$ is amenable \cite[Corollary 4.8]{SameiWiersma20}.
\begin{prop}\label{prop:amenable+symmetric-implies-spec-invariance}
	If $G$ is a locally compact group the following statements are equivalent.
	\begin{enumerate}
		\item [i)] $L^1 (G)$ is symmetric.
		\item [ii)] $\sigma_{L^1 (G)}(f) = \sigma_{\mathbb{B}(L^2 (G))} (L_f)$ for all self-adjoint $f \in L^1 (G)$.
	\end{enumerate}
\end{prop}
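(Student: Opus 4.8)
The strategy is to prove $(ii)\Rightarrow(i)$ by a direct computation with positive operators, and $(i)\Rightarrow(ii)$ by identifying the closure of the left regular representation of $L^1(G)$ with the enveloping $C^*$-algebra of $L^1(G)$ and then invoking the fact that symmetric Banach $*$-algebras are spectrally invariant in their enveloping $C^*$-algebras. Amenability will only be needed for the second implication, and it will be free of charge.

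For $(ii)\Rightarrow(i)$, let $f\in L^1(G)$ be arbitrary. The element $f^{*}f$ (involution and convolution product in $L^1(G)$) is self-adjoint, so by hypothesis $\sigma_{L^1(G)}(f^{*}f)=\sigma_{\mathbb{B}(L^2(G))}(L_{f^{*}f})$. Since $f\mapsto L_f$ is a $*$-representation, $L_{f^{*}f}=(L_f)^{*}L_f$ is a positive operator on $L^2(G)$, whence $\sigma_{\mathbb{B}(L^2(G))}(L_{f^{*}f})\subseteq[0,\infty)$. Therefore $\sigma_{L^1(G)}(f^{*}f)\subseteq[0,\infty)$ for every $f\in L^1(G)$, i.e.\ $L^1(G)$ is symmetric. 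This direction uses nothing beyond $(ii)$.

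For $(i)\Rightarrow(ii)$, assume $L^1(G)$ is symmetric. Then $G$ is amenable by \cite[Corollary 4.8]{SameiWiersma20}, so by \cref{prop:Leptin-twisted-result} applied with trivial cocycle, $f\mapsto\Vert L_f\Vert_{\mathbb{B}(L^2(G))}$ is the maximal $C^*$-norm on $L^1(G)$; since $L^1(G)$ admits a faithful $*$-representation this is a genuine norm, so the completion $B:=\overline{\{L_f: f\in L^1(G)\}}\subseteq\mathbb{B}(L^2(G))$, together with the map $L$, is $*$-isomorphic to the enveloping $C^*$-algebra of $L^1(G)$ compatibly with the canonical map from $L^1(G)$. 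If $G$ is discrete then $L^1(G)$ and $B$ are unital; otherwise replace $L^1(G)$ by its minimal unitization $L^1(G)^{\sim}$, which is again symmetric \cite[(4.7.9)]{Rickart60}, and $B$ by $B^{\sim}\subseteq\mathbb{B}(L^2(G))$ obtained by adjoining $\Id_{L^2(G)}$, which is then the enveloping $C^*$-algebra of $L^1(G)^{\sim}$, extending $L$ as in \cref{rmk:unitization-rep}. A unital symmetric Banach $*$-algebra is spectrally invariant in its enveloping $C^*$-algebra \cite[p.\ 340]{Li12}, so $\sigma_{L^1(G)}(f)=\sigma_{L^1(G)^{\sim}}(f)=\sigma_{B^{\sim}}(L_f)$ for all $f\in L^1(G)$. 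Finally, since $B^{\sim}$ is a unital $C^*$-subalgebra of $\mathbb{B}(L^2(G))$, $C^*$-algebraic spectral permanence gives $\sigma_{B^{\sim}}(L_f)=\sigma_{\mathbb{B}(L^2(G))}(L_f)$. Chaining these equalities yields $(ii)$ — in fact for all $f\in L^1(G)$, not merely self-adjoint ones.

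The only real obstacle is bookkeeping rather than a genuine difficulty: one must correctly identify the closure of the image of the left regular representation of $L^1(G)$ with the enveloping $C^*$-algebra of $L^1(G)$ — which is precisely where amenability enters, now automatic from symmetry by \cite{SameiWiersma20} — and handle the non-unital case through unitizations so that the cited equivalence "symmetric $\Leftrightarrow$ spectrally invariant in the enveloping $C^*$-algebra" is applicable. The substantive inputs (that equivalence, the Leptin-type identification of the maximal $C^*$-norm, and the amenability of symmetric $L^1(G)$) are all available as cited results, so no further hard work is required.
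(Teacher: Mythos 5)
Your proof is correct, and it is worth noting that the paper does not actually prove this proposition: it is quoted from \cite[Theorem 2.8]{grle04}, with the amenability hypothesis removed via \cite[Corollary 4.8]{SameiWiersma20}. What you have done is reconstruct the standard argument behind that citation, using only ingredients the paper already states: the easy direction ii) $\Rightarrow$ i) from positivity of $L_{f^* \natural f}$, and for i) $\Rightarrow$ ii) the chain symmetry $\Rightarrow$ amenability (\cite{SameiWiersma20}) $\Rightarrow$ $f \mapsto \Vert L_f\Vert_{\mathbb{B}(L^2(G))}$ is the maximal $C^*$-norm (\cref{prop:Leptin-twisted-result} with $c=1$), so the closure $B$ of the image of the faithful left regular representation is the enveloping $C^*$-algebra; then the equivalence ``symmetric $\Leftrightarrow$ spectrally invariant in the enveloping $C^*$-algebra'' \cite[p.\ 340]{Li12} together with Rickart's unitization fact and spectral permanence for unital $C^*$-subalgebras of $\mathbb{B}(L^2(G))$. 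This even yields ii) for all $f$, not only self-adjoint ones, which is consistent with (indeed stronger than) the stated proposition. The one step you pass over quickly is the identification of the enveloping $C^*$-algebra of $L^1(G)^{\sim}$ with $B + \C\,\Id_{L^2(G)}$: this needs the standard facts that $C^*(A^{\sim}) \cong C^*(A)^{\sim}$ for nonunital $C^*(A)$ (split any unital $*$-representation of $A^{\sim}$ into its essential and null parts over $A$, and note $|\lambda| \leq \Vert a + \lambda 1\Vert$ via the quotient $C^*(A)^{\sim}/C^*(A) \cong \C$) and that the left regular representation is nondegenerate, so the operator norm on $B + \C\,\Id$ is the unitization norm. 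These are routine, so there is no gap of substance; an alternative route, closer in spirit to how the rest of the paper argues, would be to invoke Hulanicki's criterion (\cref{thm:Hulanicki-result}) with $\A = \B = L^1(G)$, using that in a symmetric algebra the spectral radius of a self-adjoint element equals its Gelfand--Naimark pseudonorm, but your enveloping-algebra argument is equally valid and has the advantage of giving spectral equality for arbitrary elements directly.
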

Note that for a locally compact group $G$ and a continuous $2$-cocycle $c$ for $G$, the Mackey group $G_c$ is amenable if and only if $G$ is amenable \cite[Proposition 1.13]{Paterson88}.

Like in \cite{grle04}, the proofs of some crucial steps will rely on the following result of Hulanicki, see \cite{Hula72}.

\begin{prop}\label{thm:Hulanicki-result}
	Let $\A$ be a $*$-subalgebra of a Banach $*$-algebra $\B$, and suppose there is a faithful $*$-representation $\pi \colon \B \to \mathbb{B}(\mathcal{H})$, where $\mathcal{H}$ is a Hilbert space. If $\B$ is unital with unit $1_\B$ we require $\pi (1_\B) = \Id_{\mathcal{H}}$. If for all self-adjoint $a \in \A$ we have 
	\begin{equation*}
	\Vert \pi(a) \Vert_{\mathbb{B}(\mathcal{H})} = \lim_{n\to \infty} \Vert a^n \Vert^{1/n}_\B,
	\end{equation*}
	we have 
	\begin{equation*}
	\sigma_\B (a') = \sigma_{\mathbb{B}(\mathcal{H})} (\pi(a'))
	\end{equation*}
	for all self-adjoint $a' \in \A$. 
\end{prop}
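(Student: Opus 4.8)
The plan is to reduce the statement to the behaviour of real polynomials in a single self-adjoint element and then compare spectra by the spectral mapping theorem. First one disposes of the trivial inclusion: after passing to minimal unitizations $\widetilde{\A}\subseteq\widetilde{\B}$ (extending $\pi$ by $\pi(1)=\Id_{\mathcal{H}}$, which leaves all the spectra in question and the faithfulness of $\pi$ unchanged), $\pi$ is a unital homomorphism, so $\sigma_{\mathbb{B}(\mathcal{H})}(\pi(a'))\subseteq\sigma_\B(a')$ automatically. All the content is in the reverse inclusion.

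Fix a self-adjoint $a'\in\A$ and write $K=\sigma_{\mathbb{B}(\mathcal{H})}(\pi(a'))\subseteq\R$. The key point is that for every \emph{real} polynomial $p$ with $p(0)=0$ — so that $p(a')\in\A$ is again self-adjoint and the hypothesis applies to it — one has
\begin{equation*}
\max_{\lambda\in\sigma_\B(a')}|p(\lambda)| = r_\B(p(a')) = \|\pi(p(a'))\|_{\mathbb{B}(\mathcal{H})} = \max_{t\in K}|p(t)|,
\end{equation*}
where the first equality is the spectral mapping theorem for polynomials, the second combines Gelfand's spectral radius formula in $\B$ with the hypothesis, and the last is the continuous functional calculus for the self-adjoint operator $\pi(a')$ in the $C^*$-algebra $\mathbb{B}(\mathcal{H})$, using $\pi(p(a'))=p(\pi(a'))$. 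From this identity I would deduce $\sigma_\B(a')\subseteq K\cup\{0\}$: if some $\lambda_0\in\sigma_\B(a')$ lay outside $K\cup\{0\}$, then $K\cup\{0\}$ and $\{\lambda_0\}$ are disjoint compacta with $K\cup\{0\}\subseteq\R$, and by Runge/Weierstrass approximation — symmetrising a complex approximant $q$ to the real polynomial $\tfrac12(q+\bar q)$ (conjugating coefficients), which is still small on the real set $K\cup\{0\}$ and close to $1$ at $\lambda_0$, using that $\overline{\lambda_0}\in\sigma_\B(a')$ too when $\lambda_0\notin\R$, and finally subtracting the constant term — one produces a real polynomial $p$ with $p(0)=0$, $|p(\lambda_0)|>\tfrac34$ and $\max_K|p|<\tfrac14$, contradicting the display. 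Since also $K\subseteq\sigma_\B(a')$, this leaves only $\sigma_\B(a')=K$ or $\sigma_\B(a')=K\cup\{0\}$ with $0$ an isolated spectral point lying outside $K$.

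To eliminate the stray isolated point I would invoke faithfulness of $\pi$ together with the Riesz (holomorphic) functional calculus: if $0$ were an isolated point of $\sigma_\B(a')$ with $0\notin K$, the associated spectral idempotent $e=\frac{1}{2\pi i}\oint_\Gamma(\zeta-a')^{-1}\,\dif\zeta\in\B$, for $\Gamma$ a small circle about $0$ enclosing no point of $K$, is nonzero; but applying the bounded homomorphism $\pi$ (bounded as a $*$-representation of a Banach $*$-algebra, hence commuting with the contour integral and preserving inverses) gives $\pi(e)=\frac{1}{2\pi i}\oint_\Gamma(\zeta-\pi(a'))^{-1}\,\dif\zeta=0$, since $\Gamma$ and its interior avoid $\sigma_{\mathbb{B}(\mathcal{H})}(\pi(a'))=K$. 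This contradicts injectivity of $\pi$, so $\sigma_\B(a')=K=\sigma_{\mathbb{B}(\mathcal{H})}(\pi(a'))$, as claimed.

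I expect the main obstacle to be the bookkeeping around the unit: the hypothesis is only available for self-adjoint elements of $\A$, not of $\widetilde{\A}$, which forces the restriction to polynomials vanishing at the origin and the separate removal of the single possible extra spectral point $\{0\}$ by the Riesz-idempotent argument. The secondary technical point is ensuring that the approximating polynomial in the step $\sigma_\B(a')\subseteq K\cup\{0\}$ can be chosen with real coefficients even when $\lambda_0$ is non-real, which is precisely what the symmetrisation $q\mapsto\tfrac12(q+\bar q)$ takes care of.
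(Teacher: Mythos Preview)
The paper does not prove this proposition; it is stated with attribution to Hulanicki \cite{Hula72} and used as a black box in the proof of \cref{thm:main-thm}. So there is no in-paper argument to compare against.

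Your sketch is essentially a correct reconstruction of the standard proof. A couple of places deserve tightening. First, in the approximation step for non-real $\lambda_0$, the symmetrisation $q\mapsto\tfrac12(q+\bar q)$ gives $\tfrac12\bigl(q(\lambda_0)+\overline{q(\overline{\lambda_0})}\bigr)$ at $\lambda_0$, so you must choose the Runge approximant $q$ to be close to $1$ at \emph{both} $\lambda_0$ and $\overline{\lambda_0}$ simultaneously (not just at $\lambda_0$); this is of course possible since $K\cup\{0\}$ and $\{\lambda_0,\overline{\lambda_0}\}$ are disjoint compacta with connected complement, but your wording leaves the second interpolation condition implicit. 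Second, the claim that faithfulness of $\pi$ survives unitization is correct but merits one line: if $\pi(b)+\lambda\,\Id_{\mathcal H}=0$ with $\lambda\neq 0$ then $\pi(-b/\lambda)=\Id_{\mathcal H}$, and faithfulness on $\B$ forces $-b/\lambda$ to be a two-sided unit of $\B$, contradicting non-unitality; hence $\lambda=0$ and $b=0$. With these two points made explicit, the Riesz-idempotent removal of the possible stray spectral point $\{0\}$ goes through exactly as you describe, and the argument is complete.
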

For an element $a$ in a Banach $*$-algebra $\A$ the quantity $\lim_{n\to \infty} \Vert a^n \Vert^{1/n}_\A$ is equal to the \emph{spectral radius} $\rho_{\A} (a)$ of $a$ in $\A$ \cite[Theorem 1.2.7]{Murphy90}, and this notation is what we shall mostly use in the sequel.

Locally compact groups yielding symmetric convolution algebras have been studied quite extensively. As examples we mention that all locally compact compactly generated groups of polynomial growth yield symmetric convolution algebras \cite{Losert01}, as do all compact extensions of nilpotent groups \cite[p.\ 191]{Ludwig79}. The latter fact will come into play in \cref{sec:application-Gabor}.

Now let $\pi: G \to \U(\mathcal{H})$ be a faithful $c$-projective unitary representation of $G$ inducing a faithful $*$-representation $\pi : L^1 (G,c) \to \mathbb{B}(\mathcal{H})$. To deduce spectral invariance of $ L^1 (G,c)$ in $\mathbb{B}(\mathcal{H})$ the strategy in \cref{sec:spec-inv} will be to use \cref{thm:Hulanicki-result} to obtain 
\begin{equation*}
\sigma_{L^1 (G,c)} (f) = \sigma_{\mathbb{B}(\mathcal{H})} (\pi(f))
\end{equation*}
for all self-adjoint $f \in L^1 (G,c)$ and then extend to nonself-adjoint elements. It will then be of importance that
\begin{equation*}
	\Vert \tilde{\pi} (j(f)) \Vert = \Vert L_{j(f)} \Vert_{\mathbb{B}(L^2 (G_c))}
\end{equation*} 
for all $f \in L^1 (G,c)$, where $j:L^1 (G,c) \to L^1 (G_c)$ is the isometric $*$-homomorphism from \eqref{eq:j-second-argument-action} and $\tilde{\pi}$ is the faithful $*$-representation of $L^1 (G_c)$ from \eqref{eq:big-representation}. We shall want to consider a class of groups for which this is automatic.
\begin{defn}\label{defn:C*-unique}
	Let $\A$ be a Banach $*$-algebra admitting a faithful $*$-representation. We say $\A$ is \emph{$C^*$-unique} if the maximal $C^*$-norm $\Vert \cdot \Vert_*$ given by
	\begin{equation*}
	\Vert a \Vert_* = \sup \{\Vert \pi (a) \Vert_{\mathbb{B}(\mathcal{H})} \mid \pi: \A \to \mathbb{B}(\mathcal{H}) \text{ is a $*$-representation of $\A$}   \}
	\end{equation*}
	for $a \in \A$, is the unique $C^*$-norm on $\A$. 
	
	We say a locally compact group $G$ is \emph{$C^*$-unique} if $L^1 (G)$ is $C^*$-unique as a Banach $*$-algebra.
\end{defn}
A $C^*$-unique group $G$ is of course amenable, since $C^*$-uniqueness in particular implies $C^*_{\mathrm{red}}(G) = C^* (G)$, i.e.\ that the reduced $C^*$-algebra of $G$ is equal to the full $C^*$-algebra of $G$. The converse is not true \cite{Boidol84, Poguntke93}. There are some known examples of $C^*$-unique groups. As examples we mention semidirect products of abelian groups, connected metabelian groups, as well as groups where every compactly generated subgroup is of polynomial growth \cite[p.\ 224]{Boidol84}. The latter will also come into play in \cref{sec:application-Gabor}.


\section{Spectral invariance of twisted convolution algebras}\label{sec:spec-inv}
All results below will be stated and proved in terms of left representations, i.e.\ left projective unitary representations of groups and left $*$-representations of the twisted convolution algebras we treated in \cref{sec:twisted-conv-algs}. This is only due to left representations being more common in the literature. We note that with proper restatements all results in this section also apply to the case of right representations. Indeed we shall need to consider right representations in \cref{sec:application-Gabor}.

We start by presenting the main theorem of the article. The rest of the section will mostly be dedicated to its proof.

\begin{thm}\label{thm:main-thm}
	Let $G$ be a locally compact group with a continuous $2$-cocycle $c$.
	\begin{enumerate}
	    \item [i)] If $L^1 (G_c)$ is $C^*$-unique, so is $L^1 (G,c)$. 
	    \item [ii)] If $L^1 (G_c)$ is symmetric and $C^*$-unique and $\pi \colon L^1 (G,c)\to \mathbb{B}(\mathcal{H})$ is a faithful $*$-representation, then $f \mapsto \Vert \pi (f)\Vert_{\mathbb{B}(\mathcal{H})}$, $f\in L^1 (G,c)$, is the full $C^*$-norm on $L^1 (G,c)$, and $\sigma_{L^1 (G,c)}(f) = \sigma_{\mathbb{B}(\mathcal{H})}(\pi(f))$ for all $f \in L^1 (G,c)$.
	\end{enumerate}
\end{thm}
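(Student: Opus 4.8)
The plan is to reduce the twisted statement on $L^1(G,c)$ to an untwisted statement on $L^1(G_c)$, where we have Hulanicki's criterion and the hypotheses at our disposal. The overarching strategy for part ii) is to produce a \emph{faithful} $*$-representation of $L^1(G_c)$ that restricts (via an isometric $*$-homomorphism $j\colon L^1(G,c)\to L^1(G_c)$) to the given faithful representation $\pi$ of $L^1(G,c)$, and then transfer spectral information back along $j$. Concretely, I would first set up the embedding: the map $j(f)(x,\tau)=\overline{\tau}f(x)$ (suitably normalized against the measure on $\T$) lands in $L^1(G_c)$, and the convolution/involution identities alluded to before \cref{prop:convolution-and-involution-identities} and \cref{corl:graded-subspace-iso} show that $j$ is an isometric $*$-homomorphism and that $L^1(G_c)$ decomposes as a graded direct sum of the subspaces corresponding to $L^1(G,c^n)$, $n\in\Z$. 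This grading is what lets one build the ``big'' representation $\tilde\pi$ of $L^1(G_c)$: on the degree-$n$ piece one uses the $*$-representation of $L^1(G,c^n)$ induced by an appropriate power/conjugate of the projective representation attached to $\pi$, and faithfulness of $\tilde\pi$ follows because on each graded piece it is injective and the pieces are orthogonal. I expect this construction of $\tilde\pi$ and the verification of its faithfulness to be the main obstacle — everything else is comparatively formal once $\tilde\pi$ is in hand.

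With $\tilde\pi$ available, I would argue as follows. Since $G_c$ is $C^*$-unique (hypothesis of ii)), the full $C^*$-norm on $L^1(G_c)$ is its \emph{unique} $C^*$-norm; in particular the $C^*$-norm coming from the faithful representation $\tilde\pi$ coincides with the full one, and by amenability of $G_c$ (which follows from symmetry of $L^1(G_c)$ via \cite[Corollary 4.8]{SameiWiersma20}, or is assumed) together with \cref{prop:Leptin-twisted-result}/the untwisted Leptin result, it also equals $\Vert L_{\,\cdot\,}\Vert_{\mathbb{B}(L^2(G_c))}$. Next, $L^1(G_c)$ symmetric plus $C^*$-unique means, by the remark that for unital Banach $*$-algebras symmetric $=$ spectrally invariant in the enveloping $C^*$-algebra (\cite[p.\ 340]{Li12}), that $\rho_{L^1(G_c)}(F)=\Vert\tilde\pi(F)\Vert$ for self-adjoint $F\in L^1(G_c)$; equivalently $\lim_n\Vert F^n\Vert^{1/n}=\Vert\tilde\pi(F)\Vert$. (Nonunitality is handled by passing to the minimal unitization as in \cref{rmk:unitization-rep}, using that symmetry and $C^*$-uniqueness pass to/from the unitization.)

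Now apply Hulanicki's criterion, \cref{thm:Hulanicki-result}, with $\B=L^1(G_c)$, $\A=j(L^1(G,c))\subseteq L^1(G_c)$, and the faithful representation $\tilde\pi$. For self-adjoint $f\in L^1(G,c)$, $j(f)$ is self-adjoint in $L^1(G_c)$ (as $j$ is a $*$-homomorphism), so the displayed identity $\Vert\tilde\pi(j(f))\Vert=\rho_{L^1(G_c)}(j(f))=\lim_n\Vert j(f)^n\Vert^{1/n}$ holds by the previous paragraph; hence the hypothesis of \cref{thm:Hulanicki-result} is met and we conclude $\sigma_{L^1(G_c)}(j(f))=\sigma_{\mathbb{B}(\mathcal{H})}(\tilde\pi(j(f)))=\sigma_{\mathbb{B}(\mathcal{H})}(\pi(f))$ for all self-adjoint $f$. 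Because $j$ is an isometric $*$-homomorphism onto a (graded) $*$-subalgebra, $\sigma_{L^1(G,c)}(f)$ and $\sigma_{L^1(G_c)}(j(f))$ agree for self-adjoint $f$ (spectral permanence for the subalgebra $\A$ in $L^1(G_c)$, using that $j$ respects the grading so is spectrally faithful), giving $\sigma_{L^1(G,c)}(f)=\sigma_{\mathbb{B}(\mathcal{H})}(\pi(f))$ for self-adjoint $f$. Finally, to pass from self-adjoint to arbitrary $f\in L^1(G,c)$: an element $a$ of a $*$-algebra is invertible iff $a^*a$ and $aa^*$ are, and more precisely one checks $a-\lambda$ invertible in $L^1(G,c)$ iff $(a-\lambda)^*(a-\lambda)$ is; since $(a-\lambda)^*(a-\lambda)$ is self-adjoint the already-established equality of spectra for self-adjoint elements upgrades to $\sigma_{L^1(G,c)}(a)=\sigma_{\mathbb{B}(\mathcal{H})}(\pi(a))$ for all $a$, and in particular $\rho_{L^1(G,c)}$-bounds show $\Vert\pi(\cdot)\Vert$ is a $C^*$-norm agreeing with the full one. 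For part i), the same embedding $j$ and grading show any $C^*$-norm on $L^1(G,c)$ extends to a $C^*$-seminorm on $L^1(G_c)$ through the graded decomposition; $C^*$-uniqueness of $L^1(G_c)$ then forces uniqueness on $L^1(G,c)$. The one genuinely delicate point, as noted, is constructing $\tilde\pi$ and proving it is faithful — i.e.\ making rigorous the claim that a faithful $*$-representation of the degree-one piece $L^1(G,c)$ extends, using the $c^n$-twisted pieces, to a faithful $*$-representation of all of $L^1(G_c)$.
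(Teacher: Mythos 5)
Your overall architecture is the same as the paper's: embed $L^1(G,c)$ into $L^1(G_c)$ via $j$, use the decomposition of $L^1(G_c)$ into the pieces $L^1(G,c^n)$ (\cref{prop:convolution-and-involution-identities}, \cref{corl:graded-subspace-iso}), build from $\pi$ a faithful $*$-representation $\tilde\pi$ of $L^1(G_c)$, and then exploit symmetry, $C^*$-uniqueness and Hulanicki's criterion. However, there are concrete gaps. First, your recipe for the degree-$n$ components of $\tilde\pi$ ("an appropriate power/conjugate of the projective representation attached to $\pi$") is not workable as stated: $\pi$ is just an abstract faithful $*$-representation, pointwise powers of a projective representation are not multiplicative for noncommutative $G$, and even for tensor-power constructions faithfulness of the integrated representation of $L^1(G,c^n)$ is unclear. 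The paper sidesteps this entirely by filling every component $n\neq 1$ with the $c^n$-twisted left regular representation $L^{c^n}$, whose faithfulness is standard; since you flag this as the crux, the crux is left open. Second, for part i) the inference "any $C^*$-norm on $L^1(G,c)$ extends to a $C^*$-\emph{seminorm} on $L^1(G_c)$, and $C^*$-uniqueness forces uniqueness" proves nothing: there are always many $C^*$-seminorms (zero is one), and $C^*$-uniqueness only controls $C^*$-\emph{norms}. You need exactly the faithful $\tilde\pi$, so that $\Vert\tilde\pi(\cdot)\Vert$ is a $C^*$-norm on $L^1(G_c)$, hence equal to $\Vert L_{(\cdot)}\Vert_{\mathbb{B}(L^2(G_c))}$, and then \cref{lemma:spectral-radius-for-1-2} together with \cref{prop:Leptin-twisted-result} gives $\Vert\pi(f)\Vert=\Vert L^c_f\Vert$, the maximal norm.

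Third, and most importantly for part ii), your application of \cref{thm:Hulanicki-result} with $\B=L^1(G_c)$ and the representation $\tilde\pi$ produces equalities at the wrong level, and the transfer back is not the harmless "spectral permanence" you claim. Since $j(f)$ lives in a single direct summand and $\tilde\pi(j(f))$ acts as $0$ on every component $\mathcal{H}^{(k)}$, $k\neq 1$, both $\sigma_{L^1(G_c)}(j(f))$ and the spectrum of $\tilde\pi(j(f))$ on the big Hilbert space automatically contain $0$; what you actually obtain is $\sigma_{L^1(G,c)}(f)\cup\{0\}=\sigma_{\mathbb{B}(\mathcal{H})}(\pi(f))\cup\{0\}$ for self-adjoint $f$, not the asserted equality $\sigma_{L^1(G_c)}(j(f))=\sigma_{\mathbb{B}(\mathcal{H})}(\pi(f))$. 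This loss of the point $0$ is fatal for your final step, which upgrades to arbitrary $f$ precisely by transferring \emph{invertibility} of $\pi(f\natural_c f^*)$ back to $L^1(G,c)$ (and it already falsifies the claimed identity when $G$ is discrete, e.g.\ $f=\delta_e$). The paper avoids this by using the chain $\rho_{L^1(G,c)}(f)=\rho_{L^1(G_c)}(j(f))=\rho_{\mathbb{B}(L^2(G_c))}(L_{j(f)})=\Vert L^c_f\Vert=\Vert\pi(f)\Vert$ (via \cref{lemma:spectral-radius-for-1-2}, \cref{prop:amenable+symmetric-implies-spec-invariance} and part i)) only to verify Hulanicki's \emph{hypothesis}, and then applies \cref{thm:Hulanicki-result} with $\A=L^1(G,c)$ inside (the unitization of) $L^1(G,c)$ and the original representation $\pi$ on $\mathcal{H}$ as in \cref{rmk:unitization-rep}, so the conclusion is genuinely about $\sigma_{\mathbb{B}(\mathcal{H})}(\pi(f))$, including the point $0$. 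Your argument could be repaired along these lines (you already have $\Vert\pi(f)\Vert=\Vert\tilde\pi(j(f))\Vert$), but as written the spectral transfer step is a genuine gap.
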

\begin{rmk}
    \cref{thm:main-thm} also gives us sufficient conditions for $L^1 (G,c)$ to be symmetric. Namely, from statement ii) in \cref{thm:main-thm} we see that if $L^1(G_c)$ is $C^*$-unique and symmetric, then $L^1 (G,c)$ is spectrally invariant in its (unique) $C^*$-completion. Therefore it is spectrally invariant in its enveloping $C^*$-algebra, which we know happens if and only if $L^1 (G,c)$ (and therefore also its minimal unitization if $G$ is nondiscrete) is symmetric \cite[p.\ 340]{Li12}. 
\end{rmk}

We begin by embedding $L^p (G)$ as a subspace of $L^p (G_c)$ for $1\leq p \leq \infty$. Define the map $j\colon L^p (G)\to L^p (G_c)$ by
\begin{equation}\label{eq:j-second-argument-action}
	j(f)(x,\tau) = \tau f(x).
\end{equation}
\begin{lemma}\label{lemma:j-*-hom}
    Let $G$ be a locally compact group and let $c$ be a continuous $2$-cocycle for $G$. Then
	$j$ defined by \eqref{eq:j-second-argument-action} is an isometric $*$-homomorphism from $L^1 (G,c)$ to $L^1 (G_{c})$, and an isometry from $L^p (G)$ to $L^p (G_{c})$ for $1<p\leq \infty$. Moreover, if $f \in L^1 (G,c)$ and $g \in L^p (G)$, we have
	\begin{equation}\label{eq:j-preserves-convolution-action}
	j( f \natural_c g ) = j(f) * j(g)
	\end{equation}
	for $p \in [1,\infty]$.
\end{lemma}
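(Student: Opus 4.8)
The plan is to verify each claimed property of $j$ directly from the formulas, using the cocycle identities from \cref{lemma:useful-cocycle-props} and the explicit group law \eqref{eq:mackey-group-law} on $G_c$. First I would check the isometry statements: since the Haar measure on $G_c$ is the product of the Haar measure on $G$ with the normalized Haar measure $\mu$ on $\T$ (with $\mu(\T)=1$), and since $|j(f)(x,\tau)| = |\tau|\,|f(x)| = |f(x)|$ for $(x,\tau) \in G \times \T$, Fubini gives $\|j(f)\|_{L^p(G_c)}^p = \int_G \int_\T |f(x)|^p \dif\tau \dif x = \int_G |f(x)|^p \dif x = \|f\|_{L^p(G)}^p$ for $1 \le p < \infty$, and the $p=\infty$ case is immediate. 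This handles everything except the algebraic identities.

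Next I would prove the convolution identity \eqref{eq:j-preserves-convolution-action}, which is the heart of the lemma; the $*$-homomorphism claim for $p=1$ then needs only the additional check that $j$ intertwines the involutions. For the convolution, I expand $j(f)*j(g)(x,\tau)$ using the second displayed formula for convolution in $L^1(G_c)$: it equals $\int_G \int_\T j(f)(y,\xi)\, j(g)\big((y^{-1}x,\ \overline{\xi}\tau\, c(y^{-1},y)\,\overline{c(y^{-1},x)})\big)\dif\xi\dif y$. Substituting $j(f)(y,\xi) = \xi f(y)$ and $j(g)(y^{-1}x, \cdot) = \big(\overline{\xi}\tau\, c(y^{-1},y)\,\overline{c(y^{-1},x)}\big)\, g(y^{-1}x)$, the two factors of $\xi$ and $\overline{\xi}$ cancel, the $\dif\xi$ integral contributes a factor $1$, and we are left with $\tau \int_G f(y)\, g(y^{-1}x)\, c(y^{-1},y)\,\overline{c(y^{-1},x)}\, \dif y$. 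Now I invoke \eqref{eq:2-cocycle-reduction} from \cref{lemma:useful-cocycle-props} in the form $c(y^{-1},y)\,\overline{c(y^{-1},x)} = c(y,y^{-1}x)$ (this is \eqref{eq:2-cocycle-reduction} with the roles adjusted, using part ii) so that $c(y,y^{-1}) = c(y^{-1},y)$), which turns the integrand weight into $c(y,y^{-1}x)$. Hence the expression becomes $\tau \int_G f(y)\, g(y^{-1}x)\, c(y,y^{-1}x)\,\dif y = \tau\,(f \natural_c g)(x) = j(f\natural_c g)(x,\tau)$, as desired. The case of general $p \in [1,\infty]$ is the same computation, noting the convolution $f \natural_c g$ of an $L^1$ function with an $L^p$ function is well-defined in $L^p$ by Young-type estimates and the integral manipulations are justified identically.

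Finally I would check that $j$ is a $*$-map, i.e.\ $j(f^{*_c}) = j(f)^*$. Using the involution formula on $G_c$, $j(f)^*(x,\tau) = m(x^{-1})\,\overline{j(f)\big((x^{-1},\overline{\tau}\,c(x^{-1},x))\big)} = m(x^{-1})\,\overline{\overline{\tau}\,c(x^{-1},x)\,f(x^{-1})} = m(x^{-1})\,\tau\,\overline{c(x^{-1},x)}\,\overline{f(x^{-1})}$. On the other hand $j(f^{*_c})(x,\tau) = \tau\, f^{*_c}(x) = \tau\, m(x^{-1})\,\overline{c(x,x^{-1})\, f(x^{-1})} = \tau\, m(x^{-1})\,\overline{c(x,x^{-1})}\,\overline{f(x^{-1})}$, and these agree because $c(x,x^{-1}) = c(x^{-1},x)$ by \cref{lemma:useful-cocycle-props} ii). Together with linearity (obvious) and the convolution identity this shows $j$ is a $*$-homomorphism; it is injective and isometric by the first step, completing the proof. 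I expect the main obstacle to be purely bookkeeping: correctly tracking the $\T$-variables and the cocycle factors through the $G_c$-convolution formula and matching them to the right instance of \eqref{eq:2-cocycle-reduction}; there is no conceptual difficulty, only the risk of a sign or inverse slipping in the cocycle arithmetic.
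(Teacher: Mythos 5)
Your proposal is correct and follows essentially the same route as the paper's proof: direct verification of the isometry via the product Haar measure, the convolution identity by unwinding the Mackey group law and applying \eqref{eq:2-cocycle-reduction} together with part ii) of \cref{lemma:useful-cocycle-props}, and the involution check using $c(x,x^{-1})=c(x^{-1},x)$. All the cocycle bookkeeping in your computation matches the paper's, so there is nothing to correct.
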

\begin{proof}
	We begin by verifying that $j$ is an isometry for $1\leq p < \infty$. Let $f \in L^p (G)$. Then
	\begin{equation*}
	\begin{split}
	\Vert j(f) \Vert^p_{L^p (G_c)} &= \int_{G_{c}} \vert j(f) ((x,\tau))\vert^p \dif \tau \dif x 
	= \int_G \int_\T \vert \tau f(x) \vert^p \dif \tau \dif x \\
	&= \int_G \vert f(x) \vert^p \dif x = \Vert f \Vert^p_{L^p (G)}.
	\end{split}
	\end{equation*}
	Likewise, for $p = \infty$ and $f \in L^\infty (G)$ we get
	\begin{equation*}
	\Vert j(f) \Vert_{L^\infty (G_{c})} = \sup_{(x,\tau)\in G_{c}} \vert j(f)((x,\tau))\vert = \sup_{(x,\tau)\in G_{c} } \vert \tau f(x)\vert = \sup_{x\in G}\vert f(x)\vert = \Vert f\Vert_{L^\infty (G)}.
	\end{equation*}
	We now verify that $j$ is a $*$-homomorphism when $p=1$. Let $f_1 , f_2 \in L^1 (G,c)$. Then for all $(x,\tau)\in G_c$ we have
	\begin{equation*}
	\begin{split}
	(j(f_1) * j(f_2))((x,\tau)) &= \int_{G_{c}} j(f_1)((y,\xi)) j(f_2)((y,\xi)^{-1} (x,\tau)) \dif \xi \dif y \\
	&= \int_G \int_\T j(f_1)((y,\xi)) j(f_2) ((y^{-1} x, \overline{\xi} c(y,y^{-1}) \tau \overline{c(y^{-1}, x)})) \dif \xi \dif y \\
	&= \int_G \int_\T \xi f_1 (y) \overline{\xi} \tau c(y,y^{-1}) \overline{c(y^{-1}, x)} f_2 (y^{-1}x) \dif \xi \dif y \\
	&= \tau \int_G f_1 (y) f_2 (y^{-1} x) c(y,y^{-1}) \overline{c(y^{-1},x)} \dif y \\
	&= \tau\int_G f_1 (y) f_2 (y^{-1} x) c(y,y^{-1}x) \dif y \\
	&= j(f_1 \natural_{c} f_2) ((x,\tau)),
	\end{split}
	\end{equation*}
	where we in the second to last line used \eqref{eq:2-cocycle-reduction}. Doing the same calculation with $f_2 \in L^p (G)$ shows that \eqref{eq:j-preserves-convolution-action} holds.
	
	It then remains to show that $j$ respects the involutions. For $f \in L^1 (G,c)$ and all $(x,\tau) \in G_{c}$, we have 
	\begin{equation*}
	\begin{split}
	j(f)^* ((x, \tau)) &= m(x^{-1}) \overline{j(f) ((x,\tau)^{-1} )} = m(x^{-1}) \overline{j(f) ((x^{-1}, \overline{\tau} c(x,x^{-1})  )}\\
	&=  m(x^{-1}) \overline{\overline{\tau} c(x,x^{-1})} \overline{f(x^{-1})}  
	= m(x^{-1})\tau \overline{c(x^{-1},x)f(x^{-1})}\\
	&= \tau f^{*_c} (x) = j(f^{*_c})((x,\tau)) .
	\end{split}
	\end{equation*}
	Hence $j(f)^* = j(f^{*_c})$ for all $f \in L^1 (G,c)$. This finishes the proof.
\end{proof}

Since $j$ is an isometry and $L^p (G)$ is complete for all $p \in [1 , \infty]$, we get that $j(L^p (G))$ is a closed subspace of $L^p (G_{c})$. We may actually obtain a quite explicit description of this subspace. To do this, we expand functions in $L^p (G_{c})$ as Fourier series with respect to their second argument, that is, in the $\T$-variable. Since the measure on $G_{c}$ is the product measure coming from $G$ and $\T$, we have that for any $F \in L^p (G_{c})$, $1\leq p \leq \infty$, and any $x\in G$, the function $\tau \mapsto F(x,\tau)$ is in $L^p (\T) \subseteq L^1 (\T)$. Therefore the Fourier coefficients
\begin{equation}\label{eq:Fourier-coeffs}
F_k (x) =\int_\T F(x,\tau) \overline{\tau}^k \dif \tau
\end{equation}
are well-defined, and the resulting Fourier series
\begin{equation*}
F(x,\tau) = \sum_{k\in \Z} F_k (x) \tau^k
\end{equation*}
converges in $L^p (\T)$ for $1<p<\infty$. The following lemma then describes the range of $j$.

\begin{lemma}\label{lemma:range-of-j}
    Let $G$ be a locally compact group and let $c$ be a continuous $2$-cocycle for $G$. For $1\leq p \leq \infty$ we have $j(L^p (G)) = \{ F \in L^p (G_{c}) \mid F_k = 0 \text{ for $k\neq 1$}\}$.
\end{lemma}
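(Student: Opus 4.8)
The plan is to prove the two inclusions separately, using the Fourier coefficient formula \eqref{eq:Fourier-coeffs} as the main tool.

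\medskip

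For the inclusion $j(L^p(G)) \subseteq \{F \in L^p(G_c) \mid F_k = 0 \text{ for } k \neq 1\}$, I would take $f \in L^p(G)$ and simply compute the Fourier coefficients of $j(f)$ directly. By \eqref{eq:j-second-argument-action} we have $j(f)(x,\tau) = \tau f(x)$, so
\begin{equation*}
j(f)_k(x) = \int_\T \tau f(x) \overline{\tau}^k \, \dif \tau = f(x) \int_\T \tau^{1-k} \, \dif \tau,
\end{equation*}
and since $\mu(\T) = 1$ with the normalized Haar measure, the integral $\int_\T \tau^{1-k}\,\dif\tau$ equals $1$ if $k = 1$ and $0$ otherwise. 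Hence $j(f)_k = 0$ for all $k \neq 1$, which gives the first inclusion.

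\medskip

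For the reverse inclusion, suppose $F \in L^p(G_c)$ satisfies $F_k = 0$ for all $k \neq 1$. The idea is that $F$ must then be determined by its single nonzero Fourier coefficient $F_1 \in L^p(G)$, and one expects $F = j(F_1)$. For $1 < p < \infty$ this is essentially immediate: the Fourier series $F(x,\tau) = \sum_{k \in \Z} F_k(x)\tau^k$ converges in $L^p(\T)$ for almost every $x$ (using that $\tau \mapsto F(x,\tau)$ lies in $L^p(\T)$ for a.e.\ $x$, as noted before the lemma), and with only the $k=1$ term surviving we get $F(x,\tau) = F_1(x)\tau = j(F_1)(x,\tau)$ for a.e.\ $(x,\tau)$, whence $F \in j(L^p(G))$. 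For the endpoint cases $p = 1$ and $p = \infty$, where the Fourier series need not converge in the relevant norm, I would instead argue via uniqueness of Fourier coefficients in $L^1(\T)$: for a.e.\ fixed $x$, the function $\tau \mapsto F(x,\tau) - F_1(x)\tau$ lies in $L^1(\T)$ and has all Fourier coefficients equal to zero, hence is zero a.e.\ on $\T$; applying Fubini then yields $F = j(F_1)$ almost everywhere on $G_c$, and since $F \in L^p(G_c)$ and $j$ is an isometry, $F_1 \in L^p(G)$ and $F \in j(L^p(G))$.

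\medskip

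The only genuinely delicate point is the handling of the endpoint exponents $p = 1$ and $p = \infty$, where one cannot invoke $L^p$-convergence of the Fourier series and must fall back on the uniqueness theorem for Fourier coefficients on $\T$ together with a Fubini argument to pass from the "for a.e.\ $x$" statement to an honest identity of functions on $G_c$. Everything else is a routine computation with the normalized Haar measure on $\T$.
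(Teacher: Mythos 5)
Your proof is correct and follows essentially the same route as the paper: the forward inclusion is the direct computation of the coefficients of $j(f)$, and the converse recovers $F(x,\tau)=\tau F_1(x)$ from the vanishing of all other Fourier coefficients and then reads off $F_1\in L^p(G)$ from the product-measure structure. The only difference is that you spell out the uniqueness-of-Fourier-coefficients/Fubini argument (and the endpoint cases $p=1,\infty$) that the paper leaves implicit when it asserts $F(x,\tau)=\tau F_1(x)$.
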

\begin{proof}
	The inclusion $j(L^p (G)) \subseteq \{ F \in L^p (G_{c}) \mid F_k = 0 \quad \text{for $k\neq 1$}\}$ is immediate by \eqref{eq:j-second-argument-action} and \eqref{eq:Fourier-coeffs}. For the converse containment note that if $F \in \{ F \in L^p (G_{c}) \mid F_k = 0  \text{ for $k\neq 1$}\}$, then for all $(x,\tau)\in G_{c}$ we have $F(x,\tau) = \tau F_{1} (x)$. Since the measure on $G_{c}$ is the product measure we must have that $x \mapsto F_{1}(x)$ is in $L^p (G)$. Hence $F = j(F_{1})$, which proves the lemma.  
\end{proof}

To simplify notation in the sequel, denote by $L^1 (G_c)_n$ the set
\begin{equation*}\label{eq:defn-ksubspace}
	L^1 (G_c)_n := \{F \in L^1 (G_c) \mid F_k = 0  \text{ for $k\neq n$}   \}.
\end{equation*}
It is then immediate that $L^1 (G_c)_1 = j(L^1 (G,c))$. We also have the following result.
\begin{prop}\label{prop:convolution-and-involution-identities}
	Let $G$ be a locally compact group with a continuous $2$-cocycle $c$, let $F \in L^1 (G_c)$ and let $H\in L^p (G_c)$ for some $1 \leq p < \infty$. Then 
	\begin{equation}\label{eq:convolution-identity}
	( F * H)((x,\tau)) = \sum_{n \in \Z} (F_n \natural_{c^n} H_n)(x) \tau^n,
	\end{equation}
	for all $(x,\tau)\in G_c$, where $c^n$ is $c$ to the nth power as in \cref{lemma:useful-cocycle-props}. Moreover,
	\begin{equation}\label{eq:involution-identity}
		(F_n)^{*_{c^n}} = (F^*)_n
	\end{equation}
	for all $n \in \Z$.
\end{prop}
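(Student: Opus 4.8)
The plan is to compute both sides of \eqref{eq:convolution-identity} directly from the definition of convolution on $L^1(G_c)$, using the Fourier expansion in the $\T$-variable, and then to match Fourier coefficients. First I would write $F$ and $H$ in terms of their Fourier coefficients, $F(y,\xi) = \sum_n F_n(y)\xi^n$ and $H(z,\zeta) = \sum_m H_m(z)\zeta^m$, and substitute into
\begin{equation*}
(F*H)(x,\tau) = \int_G \int_\T F(y,\xi)\, H\bigl((y,\xi)^{-1}(x,\tau)\bigr)\, \dif\xi\, \dif y,
\end{equation*}
where the group law \eqref{eq:mackey-group-law} gives $(y,\xi)^{-1}(x,\tau) = (y^{-1}x,\ \overline{\xi}\,c(y,y^{-1})\,\tau\,\overline{c(y^{-1},x)})$, exactly as in the proof of \cref{lemma:j-*-hom}. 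Then $H\bigl((y,\xi)^{-1}(x,\tau)\bigr) = \sum_m H_m(y^{-1}x)\bigl(\overline{\xi}\,c(y,y^{-1})\,\tau\,\overline{c(y^{-1},x)}\bigr)^m$, so the integrand carries a factor $\xi^{n}\overline{\xi}^{m} = \xi^{n-m}$. Integrating over $\T$ kills all terms with $n \neq m$ (since $\int_\T \xi^{n-m}\,\dif\xi = 0$ unless $n=m$, and $\mu(\T)=1$), leaving
\begin{equation*}
(F*H)(x,\tau) = \sum_{n\in\Z} \tau^n \int_G F_n(y)\, H_n(y^{-1}x)\, c(y,y^{-1})^n\, \overline{c(y^{-1},x)}^n\, \dif y.
\end{equation*}
Then I would invoke \eqref{eq:2-cocycle-reduction} from \cref{lemma:useful-cocycle-props}, which says $c(y,y^{-1})\overline{c(y^{-1},x)} = c(y,y^{-1}x)$; raising to the $n$th power and recalling that $c^n(y,y^{-1}x) = c(y,y^{-1}x)^n$, the inner integral is precisely $(F_n \natural_{c^n} H_n)(x)$, which gives \eqref{eq:convolution-identity}. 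I should remark on interchanging sum and integral: for $p=1$ this is justified by Fubini together with the fact that the Fourier series of an $L^1(\T)$ function, while not norm-convergent in general, converges in the Cesàro/$L^1$ sense, or more simply one works with the pointwise-a.e.\ identity of $L^1(G_c)$ functions by testing against the characters $\tau\mapsto\tau^n$; for $1<p<\infty$ the expansion converges in $L^p(\T)$ as noted in the text. The cleanest route is probably to note that \eqref{eq:convolution-identity} is equivalent to the family of identities $(F*H)_n = F_n \natural_{c^n} H_n$ for each $n$, obtained by applying \eqref{eq:Fourier-coeffs} to $F*H$ and pushing the integral over $\tau$ through; this sidesteps any convergence worry entirely.

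For the involution identity \eqref{eq:involution-identity}, I would start from the formula
\begin{equation*}
F^*(x,\tau) = m(x^{-1})\,\overline{F\bigl((x,\tau)^{-1}\bigr)} = m(x^{-1})\,\overline{F\bigl(x^{-1},\ \overline{\tau}\,c(x^{-1},x)\bigr)},
\end{equation*}
expand $F(x^{-1},\zeta) = \sum_n F_n(x^{-1})\zeta^n$ at $\zeta = \overline{\tau}\,c(x^{-1},x)$, take the complex conjugate (which turns $\zeta^n$ into $\overline{\zeta}^n = \tau^n\,\overline{c(x^{-1},x)}^n = \tau^n\,\overline{c(x^{-1},x)^n}$), and read off the $n$th Fourier coefficient of $F^*$: it equals $m(x^{-1})\,\overline{F_n(x^{-1})}\,\overline{c(x^{-1},x)^n} = m(x^{-1})\,\overline{c^n(x,x^{-1})\,F_n(x^{-1})}$ after using part ii) of \cref{lemma:useful-cocycle-props} to replace $c(x^{-1},x)$ by $c(x,x^{-1})$ inside the $n$th power. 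This is exactly $(F_n)^{*_{c^n}}(x)$ by the definition of the $c^n$-twisted involution, which proves \eqref{eq:involution-identity}.

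I do not expect a genuine obstacle here — both parts are bookkeeping exercises in the group law and cocycle identities, essentially the same computation already carried out for $n=1$ in the proof of \cref{lemma:j-*-hom}. The only point requiring a word of care is the legitimacy of exchanging the infinite Fourier sum with the convolution integral in \eqref{eq:convolution-identity}; as indicated above this is best handled by phrasing the claim coefficient-wise rather than as a manipulation of (possibly non-convergent) series, after which everything is a finite, fully rigorous computation.
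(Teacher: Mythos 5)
Your proposal is correct and follows essentially the same route as the paper: expand in the $\T$-variable, use the Mackey group law together with the reduction identity \eqref{eq:2-cocycle-reduction} so that orthogonality of the characters kills the cross terms, and for the involution compare the $n$th coefficient of $F^*$ with the $c^n$-twisted involution of $F_n$ via $c(x^{-1},x)=c(x,x^{-1})$. The only difference is how the $p=1$ convergence issue is handled — you phrase the identity coefficient-wise via Fubini and translation invariance on $\T$ (arguably the cleaner option), while the paper first assumes finite trigonometric expansions and then invokes a standard density argument; either is fine.
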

\begin{proof}
	Below we will make use of the Fourier expansions $F(y,\xi) = \sum_{m \in \Z} F_m (y) \xi^m$ and $H(y,\xi) = \sum_{m\in \Z} H_m (y) \xi^m$, where $F_m$ and $H_m$ are obtained through \eqref{eq:Fourier-coeffs}. We will assume both $F$ and $H$ have finite expansions of the form \eqref{eq:Fourier-coeffs}. This is sufficient since trigonometric polynomials are dense in $L^p (\T)$, $1\leq p < \infty$. The extension to the full statement follows by a standard density argument. 
	
	Since $\{\xi^m\}_{m\in \Z}$ is an orthonormal system in $L^2 (\T)$, we have for all $(x,\tau)\in G_{c}$
	\begin{equation*}
	\begin{split}
	(F * H)((x,\tau)) &= \int_G \int_\T F((y,\xi)) H ((y,\xi)^{-1} (x,\tau)) \dif \xi \dif y \\
	&= \int_G \int_\T F((y,\xi)) H((y^{-1}x, \overline{\xi}  c(y^{-1},y) \tau \overline{c(y^{-1},x)}) \dif \xi \dif y \\
	&= \int_G \int_\T \sum_{m\in \Z} F_m (y) \xi^m \cdot \sum_{n\in \Z} H_n (y^{-1}x) \overline{\xi}^n \tau^n (c(y,y^{-1}x))^n \dif \xi \dif y \\
	&= \int_G \sum_{n\in \Z} F_n (y) H_n (y^{-1}x) c^n (y,y^{-1}x) \tau^n \dif y\\
	&= \sum_{n\in \Z} \bigg( \int_G F_n (y) H_n (y^{-1}x) c^n (y,y^{-1}x) \dif y \bigg) \tau^n \\
	&= \sum_{n \in \Z} (F_n \natural_{c^n} H_n)(x) \tau^n
	\end{split}
	\end{equation*}
	where we at the third equality used \eqref{eq:2-cocycle-reduction}. This establishes \eqref{eq:convolution-identity}.
	
	For any $F \in L^1 (G_c)$ we also have
	\begin{equation*}
		\begin{split}
		(F^*)_n (x) &= \int_{\T} F^* ((x,\tau)) \overline{\tau}^n \dif \tau
		= \int_{\T} m(x^{-1})\overline{F((x^{-1}, \overline{\tau} c(x^{-1},x)))}  \overline{\tau}^n \dif \tau \\
		&= m(x^{-1}) \int_\T \overline{F((x^{-1} , \tau c(x^{-1},x)))} \tau^n \dif \tau 
		= m(x^{-1})\int_\T \overline{F((x^{-1}, \tau))} \tau^n \overline{c(x^{-1},x)^n} \dif \tau \\
		&= m(x^{-1})\overline{c(x^{-1},x)^n} \overline{\int_{\T} F((x^{-1} , \tau)) \overline{\tau}^n \dif \tau} = m(x^{-1})\overline{c^n(x^{-1},x)} \overline{F_n (x^{-1})}\\
		&= (F_n)^{*_{c^n}} (x),
		\end{split}
	\end{equation*}
	for all $x \in G$, which establishes \eqref{eq:involution-identity}.
\end{proof}
The following corollary is then immediate.
\begin{corl}\label{corl:graded-subspace-iso}
    Let $G$ be a locally compact group and let $c$ be a continuous $2$-cocycle for $G$. Then
	$L^1 (G_c)_n \cong L^1 (G,c^n)$ as Banach $*$-algebras. 
\end{corl}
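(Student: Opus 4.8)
The plan is to exhibit an explicit isomorphism generalizing the map $j$ of \eqref{eq:j-second-argument-action}. For each $n \in \Z$ define $j_n \colon L^1(G, c^n) \to L^1(G_c)$ by $j_n(f)(x,\tau) = \tau^n f(x)$, so that $j_1 = j$. I claim that $j_n$ is an isometric $*$-isomorphism of $L^1(G,c^n)$ onto $L^1(G_c)_n$, and the corollary follows at once from this.

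First I would check that $L^1(G_c)_n$ is a closed $*$-subalgebra of $L^1(G_c)$. It is norm-closed by the argument in the proof of \cref{lemma:range-of-j}: each map $F \mapsto F_k$ is bounded on $L^1(G_c)$, so $L^1(G_c)_n = \bigcap_{k \neq n} \ker(F \mapsto F_k)$ is closed. It is closed under the operations by \cref{prop:convolution-and-involution-identities}: if $F, H \in L^1(G_c)_n$ then in \eqref{eq:convolution-identity} only the $n$-th term survives, so $(F * H)(x,\tau) = (F_n \natural_{c^n} H_n)(x)\tau^n \in L^1(G_c)_n$, and by \eqref{eq:involution-identity} one has $(F^*)_k = (F_k)^{*_{c^k}} = 0$ for $k \neq n$, so $F^* \in L^1(G_c)_n$.

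Next, a direct computation of Fourier coefficients gives $(j_n(f))_k = f$ when $k = n$ and $(j_n(f))_k = 0$ otherwise, so $j_n$ maps into $L^1(G_c)_n$; conversely, any $F \in L^1(G_c)_n$ satisfies $F(x,\tau) = \tau^n F_n(x)$ with $F_n \in L^1(G)$ by the product-measure argument of \cref{lemma:range-of-j}, whence $F = j_n(F_n)$, so $j_n$ is onto $L^1(G_c)_n$. That $j_n$ is isometric is the computation in \cref{lemma:j-*-hom}, using $|\tau^n| = 1$. Finally, putting $F = j_n(f)$, $H = j_n(g)$ and feeding the computed coefficients into \eqref{eq:convolution-identity} and \eqref{eq:involution-identity} yields $j_n(f) * j_n(g) = j_n(f \natural_{c^n} g)$ and $j_n(f)^* = j_n(f^{*_{c^n}})$. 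Hence $j_n$ is a surjective isometric $*$-homomorphism, i.e.\ an isomorphism of Banach $*$-algebras.

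There is essentially no obstacle here: the corollary really is immediate once \cref{prop:convolution-and-involution-identities} and the description of $j(L^p(G))$ in \cref{lemma:range-of-j} are available. The only point needing a moment's care is the bookkeeping showing that the Fourier coefficients of $j_n(f)$ concentrate in degree $n$ and that the operations of $L^1(G_c)$ restrict correctly to $L^1(G_c)_n$, which is exactly what \eqref{eq:convolution-identity} and \eqref{eq:involution-identity} encode.
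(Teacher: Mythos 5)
Your proof is correct and is exactly the argument the paper intends: the paper states the corollary as an immediate consequence of \cref{prop:convolution-and-involution-identities} together with the description of the range of $j$ in \cref{lemma:range-of-j}, and your map $j_n(f)(x,\tau)=\tau^n f(x)$ is just the obvious degree-$n$ analogue of $j$ with the verifications spelled out. No gaps; the isometry, surjectivity onto $L^1(G_c)_n$, and compatibility with $\natural_{c^n}$ and ${}^{*_{c^n}}$ are checked exactly as in the paper's proofs for $n=1$.
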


As a final preparation before proving \cref{thm:main-thm}, we need the following lemma.

\begin{lemma}\label{lemma:spectral-radius-for-1-2}
    Let $G$ be a locally compact group and let $c$ be a continuous $2$-cocycle for $G$. 
	For $f \in L^1 (G,c)$ we then have
	\begin{equation*}
	\rho_{L^1 (G,c)}(f) = \rho_{L^1 (G_{c})} (j(f)).
	\end{equation*}
	If in addition $f$ is self-adjoint we get
	\begin{equation}\label{eq:twisted-left-reg-faithful}
	\rho_{\mathbb{B}(L^2(G))}(L^c_f) = \rho_{\mathbb{B}(L^2 (G_{c}))} (L_{j(f)}).
	\end{equation}
\end{lemma}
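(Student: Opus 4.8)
The plan is to prove the two identities separately: the one for the $L^1$-spectral radii is essentially formal, while the one for the regular representations rests on a Fourier decomposition in the $\T$-variable together with the convolution formula of \cref{prop:convolution-and-involution-identities}.

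First I would recall that in any Banach $*$-algebra $\A$ one has $\rho_\A(a) = \lim_{n\to\infty}\Vert a^n\Vert_\A^{1/n}$, so that any isometric algebra homomorphism $\phi$ preserves spectral radii: $\Vert\phi(a)^n\Vert = \Vert\phi(a^n)\Vert = \Vert a^n\Vert$ for all $n$, hence $\rho(\phi(a)) = \rho(a)$. By \cref{lemma:j-*-hom}, $j$ is an isometric $*$-homomorphism from $L^1(G,c)$ into $L^1(G_c)$, and applying this observation with $\phi = j$ and $a=f$ immediately yields $\rho_{L^1(G_c)}(j(f)) = \rho_{L^1(G,c)}(f)$.

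For the second identity, assume $f = f^{*_c}$. By \cref{lemma:j-*-hom} we have $j(f)^* = j(f^{*_c}) = j(f)$, so $j(f)$ is self-adjoint in $L^1(G_c)$; since $L^c$ and $L$ are $*$-representations, both $L^c_f\in\mathbb{B}(L^2(G))$ and $L_{j(f)}\in\mathbb{B}(L^2(G_c))$ are self-adjoint operators, and for such operators the spectral radius equals the operator norm. Thus \eqref{eq:twisted-left-reg-faithful} reduces to the norm identity $\Vert L^c_f\Vert_{\mathbb{B}(L^2(G))} = \Vert L_{j(f)}\Vert_{\mathbb{B}(L^2(G_c))}$, which I would establish by two inequalities, using throughout that $L^c_f h = f\natural_c h$ for $h\in L^2(G)$ and $L_{j(f)}H = j(f)*H$ for $H\in L^2(G_c)$. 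For $\leq$: the closed subspace $j(L^2(G))\subseteq L^2(G_c)$ is invariant under $L_{j(f)}$, since by \eqref{eq:j-preserves-convolution-action} one has $L_{j(f)}\,j(h) = j(f)*j(h) = j(f\natural_c h) = j(L^c_f h)$ for $h\in L^2(G)$; as $j$ restricts to a unitary of $L^2(G)$ onto $j(L^2(G))$, the restriction of $L_{j(f)}$ to this subspace is unitarily equivalent to $L^c_f$, so $\Vert L^c_f\Vert\le\Vert L_{j(f)}\Vert$. For $\geq$: applying \cref{prop:convolution-and-involution-identities} and using that $j(f)_1 = f$ while $j(f)_k = 0$ for $k\neq 1$, one gets $(j(f)*H)(x,\tau) = (f\natural_c H_1)(x)\,\tau$ for every $H\in L^2(G_c)$, so Parseval in the $\T$-variable gives $\Vert L_{j(f)}H\Vert_{L^2(G_c)} = \Vert L^c_f H_1\Vert_{L^2(G)}\le\Vert L^c_f\Vert\,\Vert H_1\Vert_{L^2(G)}\le\Vert L^c_f\Vert\,\Vert H\Vert_{L^2(G_c)}$, whence $\Vert L_{j(f)}\Vert\le\Vert L^c_f\Vert$.

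I do not expect a serious obstacle: once \cref{prop:convolution-and-involution-identities} is available — it says precisely that convolution on the Mackey group respects the Fourier grading in the $\T$-variable — the remainder is bookkeeping with Fourier coefficients. The one point to be careful about is that self-adjointness is used only at the last step, in passing from equality of operator norms to equality of spectral radii; the norm identity $\Vert L^c_f\Vert = \Vert L_{j(f)}\Vert$ itself holds for all $f\in L^1(G,c)$, but without normality one cannot deduce equality of the spectral radii from it.
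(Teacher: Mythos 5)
Your proposal is correct and takes essentially the same route as the paper: the first identity via the isometric $*$-homomorphism $j$ from \cref{lemma:j-*-hom}, and the second by reducing, through self-adjointness and the $C^*$-property of the operator norm, to the norm identity $\Vert L^c_f\Vert_{\mathbb{B}(L^2(G))} = \Vert L_{j(f)}\Vert_{\mathbb{B}(L^2(G_c))}$. Your two-inequality argument with Fourier coefficients in the $\T$-variable is just a more explicit phrasing of the paper's observation (via \cref{prop:convolution-and-involution-identities}) that $L_{j(f)}$ restricts to a unitary copy of $L^c_f$ on $j(L^2(G))$ and vanishes on $j(L^2(G))^{\perp}$.
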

\begin{proof}
	Since $j \colon L^1 (G,c) \to L^1 (G_{c})$ is an isometric $*$-homomorphism we have
	\begin{equation*}
	\rho_{L^1 (G,c)}(f) = \lim_{n\to \infty} \Vert f^n \Vert^{1/n}_{L^1 (G,c)} = \lim_{n\to \infty} \Vert j(f)^n \Vert^{1/n}_{L^1(G_{c})}= \rho_{L^1 (G_{c})} (j(f)),
	\end{equation*}
	which proves the first statement. 
	
	For the second statement, let $f\in L^1 (G,c)$ be self-adjoint. Since $f$ is self-adjoint and $L^c_f$ and $L_{j(f)}$ realize $f$ and $j(f)$ as bounded operators on Hilbert spaces, i.e.\ as elements of a $C^*$-algebra, we have
	\begin{equation}
	\rho_{\mathbb{B}(L^2(G))}(L^c_f) = \Vert L^c_f \Vert_{\mathbb{B}(L^2(G))} \quad \text{and} \quad \rho_{\mathbb{B}(L^2 (G_{c}))} (L_{j(f)}) = \Vert L_{j(f)}\Vert_{\mathbb{B}(L^2 (G_{c}))},
	\end{equation}
	see \cite[Theorem 2.1.1]{Murphy90}. Hence it suffices to show that $\Vert L^c_f \Vert_{\mathbb{B}(L^2(G))} = \Vert L_{j(f)}\Vert_{\mathbb{B}(L^2 (G_{c}))}$. To do this, note first that by \cref{lemma:j-*-hom}
	\begin{equation*}
	L_{j(f)} j(g) = j(f) * j(g) = j(f \natural_c g) = j(L^c_f  g)
	\end{equation*}
	for any $g \in L^2 (G)$. Moreover, by \cref{prop:convolution-and-involution-identities} we see that $L_{j(f)}\vert_{j(L^2 (G))^{\perp}}=0$. Since $j: L^2 (G) \to L^2 (G_{c})$ is an isometry it then follows that $\Vert L^c_f \Vert_{\mathbb{B}(L^2(G))} = \Vert L_{j(f)}\Vert_{\mathbb{B}(L^2 (G_{c}))}$, which finishes the proof.
\end{proof}

We are finally ready to prove \cref{thm:main-thm}.
\begin{proof}[Proof of \cref{thm:main-thm}]
We begin by proving i). Let $\pi \colon L^1 (G,c)\to \mathbb{B}(\mathcal{H})$ be a faithful $*$-representation. As $G_c$ is assumed $C^*$-unique, $G_c$ is in particular amenable, so it follows that $G$ is also amenable. Then \cref{prop:Leptin-twisted-result} gives that $f \mapsto \Vert L^c_f\Vert_{\mathbb{B}(L^2(G))}$, $f\in L^1 (G,c)$, is the maximal $C^*$-norm on $L^1 (G,c)$. Hence it suffices to prove that $\Vert \pi (f)\Vert_{\mathbb{B}(\mathcal{H})} = \Vert L^c_f\Vert_{\mathbb{B}(L^2 (G))}$ for all $f \in L^1 (G,c)$. To do this, we will first extend $\pi$ to a faithful $*$-representation of $L^1 (G_c)$. The obvious attempt at a $*$-representation of $L^1 (G_c)$, namely the integrated representation of $\pi_c \colon G_c \to \U(\mathcal{H})$ as in \eqref{eq:induced-rep-on-Mackey}, is in general not faithful as noted at the end of \cref{subsec:obtaining-C*-algs}. The construction of the desired faithful $*$-representation $\Tilde{\pi}$ of $L^1 (G_c)$ is therefore more involved. 
	
For all $n \in \Z$ we know by \cref{corl:graded-subspace-iso} that $L^1(G_c)_n \cong L^1 (G,c^n)$ as Banach $*$-algebras, and in the sequel we make this identification to ease notation. For any $n \in \Z \setminus \{1\}$ we define 
	\begin{equation*}
		\pi^{(n)} := L^{c^n} \colon L^1 (G,c^n) \to \mathbb{B}(L^2 (G)),
	\end{equation*}
	and set 
	\begin{equation*}
		\pi^{(1)} := \pi \colon L^1 (G,c)\to \mathbb{B}(\mathcal{H}).
	\end{equation*}
	Then $\pi^{(n)}$ is a faithful $*$-representation of $L^1 (G,c^n)$ for all $n \in \Z$. 
	Moreover, we set
	\begin{equation*}\label{eq:large-hilbert-space}
	\mathcal{H}^{(n)} =
	\begin{cases*}
	L^2 (G) & if $n\in \Z\setminus \{1\}$ \\
	\mathcal{H}        & if $n=1$.
	\end{cases*}
	\end{equation*}
	We then consider the map $\Tilde{\pi}: L^1 (G_c) \to \oplus_{k\in \Z} \mathbb{B}(\mathcal{H}^{(k)})$ which for $F \in L^1 (G_c)$ is given by
	\begin{equation}\label{eq:big-representation}
	F \mapsto (F_k)_{k\in \Z} \mapsto \bigoplus_{k\in \Z} \pi^{(k)}(F_k).
	\end{equation}
	We must verify that this is a faithful $*$-homomorphism. Continuity will then follow since any $*$-homomorphism from a Banach $*$-algebra to a $C^*$-algebra is continuous \cite[Theorem 2.1.7]{Murphy90}
	
	For $F,H \in L^1 (G_c)$ it then follows from \eqref{eq:convolution-identity} that
	\begin{equation*}
		\begin{split}
		\Tilde{\pi}(F* H) = \bigoplus_{k\in \Z} \pi^{(k)} (F_k \natural_{c^k} H_k) = \bigoplus_{k\in \Z} \pi^{(k)}(F_k) \circ \pi^{(k)}(H_k) = \Tilde{\pi}(F)\Tilde{\pi}(H).
		\end{split}
	\end{equation*}
	It also follows from \eqref{eq:involution-identity} that
	\begin{equation*}
		\Tilde{\pi}(F^*) = \bigoplus_{k\in \Z}  \pi^{(k)} ((F^*)_k) = \bigoplus_{k\in \Z} \pi^{(k)} ((F_k)^{*_{c^k}}) = \bigoplus_{k\in \Z}  \pi^{(k)} (F_k)^* = \Tilde{\pi}(F)^*.
	\end{equation*}
	We conclude that $\Tilde{\pi}$ is a continuous $*$-homomorphism.
	
	Now suppose $F \in L^1 (G_c)$ is such that $\Tilde{\pi}(F) = 0$. Then $\pi^{(k)}(F_k) = 0$ for all $k\in \Z$, and since $\pi^{(k)} \colon L^1 (G,c^k) \to \mathbb{B}(\mathcal{H}^{(k)})$ are all faithful, we conclude that $F_k =0$ for all $k \in \Z$. Since the Fourier transform is injective on $L^1$, this happens if and only if $F=0$ almost everywhere, i.e.\ if $F=0$ in $L^1 (G_c)$. We deduce that $\Tilde{\pi}$ is a faithful $*$-homomorphism. 
	
	Observe that since $\mathcal{H}^{(1)} = \mathcal{H}$, the two representations $\pi \colon L^1 (G,c) \to \mathbb{B}(\mathcal{H})$ and $\Tilde{\pi} \circ j \colon L^1 (G,c) \to \mathbb{B}(\mathcal{H}^{(1)})$ can naturally be identified. Using the $C^*$-identity and \cref{lemma:spectral-radius-for-1-2} we then obtain
	\begin{equation*}
	    \begin{split}
	        \Vert \pi (f) \Vert^2_{\mathbb{B}(\mathcal{H})} &= \Vert \pi (f^* \natural_c f) \Vert_{\mathbb{B}(\mathcal{H})} = \Vert \tilde{\pi}(j(f^* \natural_c f)) \Vert_{\oplus_{k\in \Z}\mathbb{B}(\mathcal{H}^{(k)})} \\
	        &= \Vert L_{j(f^* \natural_c f)}\Vert_{\mathbb{B}(L^2 (G_c))}
	        = \Vert L^c_{f^* \natural_c f}\Vert_{\mathbb{B}(L^2(G))} = \Vert L^c_f \Vert^2_{\mathbb{B}(L^2 (G))}
	    \end{split}
	\end{equation*}
	for all $f \in L^1 (G,c)$, which proves i). 
	
	To prove ii), let first $f\in L^1 (G,c)$ be self-adjoint. Using \cref{lemma:spectral-radius-for-1-2}, \cref{prop:amenable+symmetric-implies-spec-invariance} and i) of \cref{thm:main-thm}, we have the following chain of equalities
	\begin{equation*}
	    \begin{split}
	        \rho_{L^1 (G,c)}(f) &= \rho_{L^1 (G_c)}(j(f)) = \rho_{\mathbb{B}(L^2 (G_c))}(L_{j(f)}) \\
	        &= \rho_{\mathbb{B}(L^2 (G))}(L^c_f) = \Vert L^c_f \Vert_{\mathbb{B}(L^2 (G))} = \Vert \pi (f) \Vert_{\mathbb{B}(\mathcal{H})}.
	    \end{split}
	\end{equation*}
	By \cref{thm:Hulanicki-result} it then follows that $\sigma_{L^1 (G,c)}(f) = \sigma_{\mathbb{B}(\mathcal{H})}(\pi (f))$ for all self-adjoint $f \in L^1 (G,c)$. 
	
	In the following we will assume $L^1 (G,c)$ is unital. If this is not the case we may do the same argument by going to the minimal unitization $L^1 (G,c)^\sim$ and lift the $*$-representation as in \cref{rmk:unitization-rep}.
	Now let $f \in L^1 (G,c)$ be an arbitrary element for which $\pi (f)$ is invertible in $\mathbb{B}(\mathcal{H})$. Then $f \natural_c f^*$ is self-adjoint and $\pi (f \natural_c f^*)$ is also invertible in $\mathbb{B}(\mathcal{H})$. By the above, $f \natural_c f^*$ is invertible in $L^1 (G,c)$, and likewise we obtain that $f^* \natural_c f$ is invertible in $L^1 (G,c)$. Then $f^* \natural_c (f \natural_c f^*)^{-1}$ is a right inverse of $f$ in $L^1 (G,c)$, and $(f^* \natural_c f)^{-1} \natural_c f^*$ is a left inverse of $f$ in $L^1 (G,c)$, which implies that $f$ is invertible in $L^1 (G,c)$. Applying the same argument to elements of the form $f - \lambda \cdot 1$, $\lambda \in \C$, we get that
	\begin{equation*}
	\sigma_{L^1 (G,c)} (f) \subseteq \sigma_{\mathbb{B}(\mathcal{H})}(\pi (f)).
	\end{equation*}
	Since $f \mapsto \pi (f)$ is a $*$-representation the converse containment always holds. Hence we conlude that $\sigma_{L^1 (G,c)} (f)= \sigma_{\mathbb{B}(\mathcal{H})}(\pi (f)) $ for all $f \in L^1 (G,c)$.
\end{proof}

\section{Applications to Gabor analysis}\label{sec:application-Gabor}
We begin by introducing the central concepts of Gabor analysis, before formulating the main result of this section. We then rephrase the setting of the problem in terms spectral invariance of a certain convolution algebra and use \cref{thm:main-thm} to prove the result. 

Throughout this section $G$ will be a locally compact abelian group and $\widehat{G}$ will be its Pontryagin dual. Note that we will write the group operation additively. 
Moreover, $\Sub$ will denote a closed cocompact subgroup of the time-frequency plane $\tfp{G}$. The reason for restricting to cocompact subgroups will be made clear in \cref{rmk:cocompact-necessary}. We fix a Haar measure on $G$ and equip $\widehat{G}$ with the dual measure such that Plancherel's formula holds \cite[Theorem 3.4.8]{DeEc14}. We also fix a Haar measure on $\Sub$, and give $(\tfp{G})/\Sub$ the unique measure such that Weil's formula holds \cite[equation (2.4)]{JaLe16}. The \emph{size of $\Sub$} is the quantity $s(\Sub) := \mu ((\tfp{G})/\Sub)$, where $\mu$ is the chosen Haar measure. As $\Sub$ is cocompact in $\tfp{G}$, we have $s(\Sub) < \infty$.

We proceed to introduce the two unitary operators most relevant for Gabor analysis. Given $x \in G$ and $\omega \in \widehat{G}$, we define the \emph{translation operator} $T_x$ and \emph{modulation operator} $M_\omega$ on $L^2 (G)$ by
\begin{equation*}
	(T_x f)(t) = f(t-x), \quad (M_\omega f)(t) = \omega (t) f(t)
\end{equation*}
for $f \in L^2 (G)$ and $t \in G$. Moreover, we define a \emph{time-frequency shift} by
\begin{equation}\label{eq:tf-shift}
	\pi (x,\omega) := M_\omega T_x
\end{equation}
for $x \in G$ and $\omega \in \widehat{G}$.

Having introduced both translation and modulation we may define the subgroup of $\tfp{G}$ which will be of greatest importance to us when proving \cref{thm:continuous-frame-inverse-regularity}. This is due to the reformulations of the frame operator in \eqref{eq:frame-op-rewritten} and \eqref{eq:frame-op-module-reform} below. The \emph{adjoint subgroup} of $\Sub$, denoted $\Subo$, is the closed subgroup of $\tfp{G}$ defined by
\begin{equation}\label{eq:adjoint-subgroup}
	\Subo := \{w \in \tfp{G} \mid \pi (z)\pi(w) = \pi(w)\pi(z) \text{ for all $z \in \Sub$} \}.
\end{equation}
We may identify $\Subo$ with $((\tfp{G})/\Sub)^{\widehat{}}$ as in \cite[p. 234]{JaLe16} to pick the dual measure on $\Subo$ corresponding to the measure on $(\tfp{G})/\Sub$. $\Sub$ is cocompact in $\tfp{G}$, so $\Subo$ 
is discrete. The induced measure on $\Subo$ is the counting measure scaled with the constant $s(\Sub)^{-1}$ \cite[equation (13)]{jalu18duality}.

Given $g \in L^2 (G)$, a \emph{Gabor system over $\Sub$ with generator $g$} is a family $\GS (g;\Sub) := (\pi (z) g)_{z\in \Sub}$. It is called a \emph{Gabor frame} if it is a (continuous) frame for $L^2 (G)$ \cite{AlAn93, JaLe16, Ka94} in the sense that the following conditions are satisfied:
\begin{enumerate}
	\item [i)] The family $\GS (g,\Sub)$ is weakly measurable, i.e.\ for every $f \in L^2 (G)$ the map $z \mapsto \hs{f}{\pi (z)g}$ is measurable.
	\item [ii)] There exist positive constants $C,D >0$ such that for all $f \in L^2 (G)$ we have that
	\begin{equation*}
		C \Vert f \Vert_2^2 \leq \int_{\Sub} \vert \hs{f}{\pi (z) g} \vert^2 \dif z \leq D \Vert f \Vert_2^2.
	\end{equation*}
\end{enumerate}
\begin{rmk}\label{rmk:cocompact-necessary}
	Gabor frames $\GS(g;\Sub)$ for $L^2 (G)$ with $g \in L^2 (G)$ can only exist if $\Sub$ is cocompact \cite[Theorem 5.1]{JaLe16}. Indeed, this is also the case if we consider finitely many functions $g_1, \ldots ,g_k \in L^2 (G)$ and a Gabor system $\GS(g_1, \ldots , g_k ; \Sub)$ as in \cref{rmk:multiwindow-extension} below \cite[Lemma 4.9]{jalu18duality}. The same is true if we consider matrix frames introduced in \cite{AuJaLu19}, see \cite[Proposition 4.34]{AuJaLu19}.
\end{rmk}
If $\GS(g;\Sub)$ is weakly measurable and $D < \infty$ for this family, we say $\GS(g;\Sub)$ is a \emph{Bessel system}. Associated to any Bessel system $\GS(g;\Sub)$ is a linear bounded operator known as the \emph{frame operator associated to} $\GS(g;\Sub)$. It is the operator
\begin{equation*}
	\begin{split}
	S \colon L^2 (G) &\to L^2 (G) \\
	f &\mapsto \int_{\Sub} \hs{f}{\pi (z) g} \pi (z) g \dif z,
	\end{split}
\end{equation*}
where we interpret the integral weakly. 
It is well-known in frame theory that $S$ commutes with all time-frequency shifts $\pi (z)$ when $z \in \Sub$, and that $\GS(g;\Sub)$ is a Gabor frame for $L^2 (G)$ if and only if $S$ is invertible on $L^2 (G)$. Moreover, it is not hard to see that $S$ is a positive operator. 

Now let $\GS(g;\Sub)$ be a Gabor frame for $L^2 (G)$. We have
\begin{equation}\label{eq:dual-atom-reconstruction}
	f = S^{-1} S f = \int_{\Sub} \hs{f}{\pi(z)g} \pi (z) S^{-1}g \dif z
\end{equation}
for all $f \in L^2 (G)$. The function $S^{-1}g$ is known as \emph{the canonical dual atom of $g$}. Moreover, we have
\begin{equation}\label{eq:tight-atom-reconstruction}
	f = S^{-1/2}S S^{-1/2} f = \int_{\Sub} \hs{f}{\pi(z) S^{-1/2}g} \pi(z) S^{-1/2} g \dif z
\end{equation}
for all $f \in L^2 (G)$. The function $S^{-1/2}g$ is known as \emph{the canonical tight atom associated to $g$}.

As a last preparation before presenting the main result of this section we must introduce a function space. Let $g \in L^2 (G)$. We define the \emph{short-time Fourier transform with respect to} $g$ to be the operator $V_g : L^2 (G) \to L^2 (\tfp{G})$ given by
\begin{equation*}
V_g f (z) = \hs{f}{\pi (z) g}
\end{equation*} 
for $f \in L^2 (G)$ and $z \in \tfp{G}$. Using this, we define the \emph{Feichtinger algebra} $S_0 (G)$ by
\begin{equation}\label{eq:def-feichtinger-alg}
	S_0 (G) := \{f \in L^2 (G) \mid V_f f \in L^1 (\tfp{G})   \}.
\end{equation}
The Feichtinger algebra is known as a nice space of test functions for time-frequency analysis, and its elements have good decay in both time and frequency. We refer the reader to \cite{ja18} for more information on the Feichtinger algebra. At last, we may state the main theorem of this section.

\begin{thm}\label{thm:continuous-frame-inverse-regularity}
	Let $\Sub \subseteq \tfp{G}$ be a closed cocompact subgroup, and suppose $g \in S_0 (G)$ is such that $\GS(g;\Sub)$ is a Gabor frame for $L^2 (G)$. Then $S^{-1}g, S^{-1/2}g \in S_0 (G)$ as well. 
\end{thm}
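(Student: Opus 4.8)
The plan is to translate the Gabor-analytic problem into the language of twisted convolution algebras and then invoke \cref{thm:main-thm}. First I would recall the fundamental identity (due to Janssen, and valid in the locally compact abelian setting by \cite{JaLe16,jalu18duality}) expressing the frame operator $S$ of a Bessel system $\GS(g;\Sub)$ with $g \in S_0(G)$ as a twisted-convolution-type operator over the adjoint subgroup: there is a sequence $(a_w)_{w \in \Subo}$, given by $a_w = s(\Sub)^{-1}\hs{g}{\pi(w)g}$, such that $S = \sum_{w \in \Subo} a_w\, \pi(w)$, with absolute convergence in operator norm because $g \in S_0(G)$ forces $(a_w)_{w\in\Subo} \in \ell^1(\Subo)$ (this is essentially the defining property of $S_0$ together with the Wexler--Raz / Janssen representation). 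The map $\mathbf{a} = (a_w) \mapsto \sum_w a_w \pi(w)$ is a faithful nondegenerate $*$-representation of the twisted convolution algebra $\ell^1(\Subo, \cbar)$ on $L^2(G)$, where $\cbar$ is the (restriction to $\Subo$ of the) conjugate Heisenberg $2$-cocycle; faithfulness follows because $\Subo$ is discrete and the time-frequency shifts $\{\pi(w)\}_{w\in\Subo}$ are linearly independent in a strong enough sense (e.g.\ by looking at matrix coefficients, or by the fact that this representation is unitarily equivalent to a regular-type representation since $\Subo$ is discrete abelian). One must be careful about left vs.\ right representations here, but as noted in the excerpt the results of \cref{sec:spec-inv} apply equally to right representations after the obvious restatements.

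Next I would verify the two hypotheses of \cref{thm:main-thm}: that $\ell^1((\Subo)_{\cbar})$ — the convolution algebra of the Mackey group $(\Subo)_{\cbar}$ — is both symmetric and $C^*$-unique. Since $\Subo$ is a discrete abelian group (in fact compactly generated plus a discrete part: it is isomorphic to $\Z^k \times F$ for a finitely generated part times something, but more to the point it is abelian), and the Mackey group $(\Subo)_{\cbar}$ is a central extension of an abelian group by $\T$, it is a two-step nilpotent (hence compactly generated of polynomial growth, locally) group. By \cite{Losert01} compactly generated groups of polynomial growth yield symmetric convolution algebras, and by \cite[p.\ 191]{Ludwig79} compact extensions of nilpotent groups are symmetric; either way $L^1((\Subo)_{\cbar})$ is symmetric. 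For $C^*$-uniqueness: abelian groups are $C^*$-unique, and since every compactly generated subgroup of $(\Subo)_{\cbar}$ is of polynomial growth (being a central $\T$-extension of a subgroup of $\Subo$), \cite[p.\ 224]{Boidol84} gives that $(\Subo)_{\cbar}$ is $C^*$-unique. Thus \cref{thm:main-thm}(ii) applies: $\ell^1((\Subo), \cbar)$ is spectrally invariant in $\mathbb{B}(L^2(G))$ under the representation $\mathbf{a} \mapsto \sum a_w \pi(w)$.

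Now I would run the spectral-invariance argument. Since $\GS(g;\Sub)$ is a frame, $S$ is invertible in $\mathbb{B}(L^2(G))$; since $S$ is the image of $\mathbf{a} \in \ell^1(\Subo,\cbar)$ (after unitizing, if necessary — $S$ is a positive invertible operator so $0 \notin \sigma_{\mathbb{B}(L^2(G))}(S)$, hence $0 \notin \sigma_{\ell^1(\Subo,\cbar)^\sim}(\mathbf{a})$ by spectral invariance), the operator $S^{-1}$ is again the image of some element $\mathbf{b} \in \ell^1(\Subo,\cbar)$ (or its unitization), i.e.\ $S^{-1} = \sum_{w\in\Subo} b_w \pi(w)$ with $(b_w) \in \ell^1(\Subo)$. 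Then $S^{-1}g = \sum_w b_w \pi(w)g$ is an absolutely convergent sum in $S_0(G)$ — because $S_0(G)$ is a Banach space on which time-frequency shifts act isometrically, so $\sum_w |b_w|\,\|\pi(w)g\|_{S_0} = \|g\|_{S_0}\sum_w|b_w| < \infty$ — and therefore $S^{-1}g \in S_0(G)$. For the tight atom, $S^{-1/2}$ can be obtained by a holomorphic (or continuous) functional calculus applied to $S$: the function $\lambda \mapsto \lambda^{-1/2}$ is holomorphic on a neighbourhood of $\sigma(S) \subseteq (0,\infty)$, and by spectral invariance (equivalently, by symmetry of $\ell^1((\Subo)_{\cbar})$ which passes to $\ell^1(\Subo,\cbar)$, together with closedness of the holomorphic functional calculus) $S^{-1/2}$ is again represented by an element of $\ell^1(\Subo,\cbar)$; one may alternatively expand $\lambda^{-1/2}$ as a norm-convergent power series in $(S - \|S\|/2)$ or similar within the Banach algebra. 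Hence $S^{-1/2}g = \sum_w b'_w \pi(w)g \in S_0(G)$ by the same absolute-convergence argument. The main obstacle I anticipate is the first step — rigorously establishing the Janssen representation $S = \sum_{w\in\Subo} a_w\pi(w)$ with $\ell^1$ coefficients and the faithfulness of the associated representation of $\ell^1(\Subo,\cbar)$ in the full generality of closed cocompact $\Sub \subseteq \tfp{G}$ (not just lattices) — and correctly bookkeeping the cocycle $\cbar$, the measure normalizations (the factor $s(\Sub)^{-1}$), and the left/right-representation distinction; once that is in place, the invocation of \cref{thm:main-thm} and the $S_0$-convergence argument are comparatively routine.
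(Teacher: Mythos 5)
Your proposal follows essentially the same route as the paper's proof: rewriting the frame operator via the fundamental identity of Gabor analysis as the image of an $\ell^1(\Subo,\cbar)$ element under the faithful (right) $*$-representation $\pi^*$, verifying symmetry and $C^*$-uniqueness of $L^1\bigl((\Subo)_{\cbar}\bigr)$ through polynomial growth and the compact-extension-of-nilpotent argument, invoking \cref{thm:main-thm} for spectral invariance, and handling $S^{-1/2}$ by holomorphic functional calculus. Your direct absolute-convergence argument for $S_0$-membership is just a hands-on version of the paper's citation that $\pi^*(\ell^1(\Subo,\cbar))$ leaves $S_0(G)$ invariant, so the two arguments coincide in substance.
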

In the case when $\Sub$ is a separable lattice in $\tfp{\R^d}$, \cref{thm:continuous-frame-inverse-regularity} was proved in \cite{grle04}, and it was claimed to hold for general lattices in phase spaces of locally compact abelian groups. It is possible that their techniques can be adapted to the setting of closed cocompact subgroups. However, it will turn out that the result is easier to deduce by using \cref{thm:main-thm}, thereby circumventing any need to use the periodization techniques of \cite{grle04}. 
In order to show \cref{thm:continuous-frame-inverse-regularity} we will reformulate the above setup to incorporate twisted convolution algebras. As a first step towards this we present the Fundamental Identity of Gabor Analysis. We refer the reader to \cite[Proposition 2.11]{ri88} for a proof. There the Schwartz-Bruhat space is used, but the proof can easily be adapted to the case of $S_0 (G)$.
\begin{prop}\label{prop:FIGA}
	Let $f,g,h \in S_0 (G)$. Then
	\begin{equation*}\label{eq:FIGA}
		\int_{\Sub} \hs{f}{\pi (z)g} \pi (z) h \dif z = \frac{1}{s(\Sub)}\sum_{w\in \Subo} \hs{\pi (w) h}{g} \pi(w)^* f
	\end{equation*}
	where we interpret the integral and the sum weakly.
\end{prop}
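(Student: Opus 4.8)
The plan is to verify the stated operator identity weakly, by pairing both sides---viewed as vectors in $L^2(G)$ for fixed $f,g,h\in S_0(G)$---against an arbitrary $\phi\in S_0(G)$. Since $S_0(G)$ is dense in $L^2(G)$ and both sides are genuinely well-defined elements of $L^2(G)$ (the left-hand side because $g,h\in S_0(G)$ and cocompactness of $\Sub$ give the usual Bessel bound, the right-hand side because sampling the short-time Fourier transform of two $S_0(G)$-functions over the discrete group $\Subo$ yields an absolutely summable family), it suffices to prove the scalar identity
\begin{equation*}
\int_{\Sub} \hs{f}{\pi(z)g}\,\overline{\hs{\phi}{\pi(z)h}}\,\dif z = \frac{1}{s(\Sub)}\sum_{w\in\Subo}\hs{\pi(w)h}{g}\,\hs{\pi(w)^* f}{\phi}
\end{equation*}
for all $\phi\in S_0(G)$, where I have used $\hs{\pi(z)h}{\phi}=\overline{\hs{\phi}{\pi(z)h}}$.

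Writing $V_g f(z)=\hs{f}{\pi(z)g}$, the left-hand integrand is the function $\Phi:=V_g f\cdot\overline{V_h\phi}$ on $\tfp{G}$. Since $f,g,h,\phi\in S_0(G)$, each short-time Fourier transform lies in $S_0(\tfp{G})$, and because $S_0(\tfp{G})$ is a Banach algebra under pointwise multiplication that is stable under complex conjugation, we have $\Phi\in S_0(\tfp{G})\subseteq L^1(\tfp{G})$ \cite{ja18}. The integral over $\Sub$ is then the value at the identity coset of the $\Sub$-periodization of $\Phi$, and the decisive step is Poisson summation. Recall from the measure conventions fixed at the start of the section that $\Subo$ is identified with $(\tfp{G}/\Sub)^{\widehat{}}$ via the symplectic pairing and carries the counting measure scaled by $s(\Sub)^{-1}$. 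Using that Poisson summation holds pointwise for functions in $S_0$ over any closed subgroup---which is precisely the property that lets one transcribe the Schwartz--Bruhat argument of \cite{ri88} to the $S_0$ setting---I would obtain
\begin{equation*}
\int_{\Sub}\Phi(z)\,\dif z = \frac{1}{s(\Sub)}\sum_{w\in\Subo}\widehat{\Phi}(w),
\end{equation*}
where $\widehat{\Phi}$ denotes the symplectic Fourier transform of $\Phi$ on $\tfp{G}$.

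It then remains to identify $\widehat{\Phi}(w)$. The core computation is the factorization of the symplectic Fourier transform of a product of two short-time Fourier transforms, namely
\begin{equation*}
\widehat{\Phi}(w) = \hs{\pi(w)h}{g}\,\hs{\pi(w)^* f}{\phi},
\end{equation*}
which I would establish by a direct calculation: expand the two inner products as integrals over $G$, interchange the order of integration, and use the composition rule $\pi(z)\pi(w)=c(z,w)\pi(z+w)$ for the Heisenberg cocycle $c$ together with the orthogonality relations for the characters of $\tfp{G}$. Note that the conjugate on $V_h\phi$ is exactly what swaps the pairings, so that after transformation $f$ is paired with $\phi$ and $h$ with $g$. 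Substituting this factorization into the Poisson summation formula reproduces precisely the right-hand side of the scalar identity, and since $\phi\in S_0(G)$ was arbitrary, the vector identity---hence the operator identity---follows.

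The main obstacle is twofold and lies entirely in the two technical inputs. First, one must justify that Poisson summation is valid pointwise for $S_0(\tfp{G})$-functions over the general closed cocompact subgroup $\Sub$---not merely a lattice---with the precise normalization constant $s(\Sub)^{-1}$ dictated by Weil's formula and the dual measure on $\Subo$; this is where the adaptation from the Schwartz--Bruhat space of \cite{ri88} to the Feichtinger algebra genuinely takes place, and it rests on the robust behaviour of $S_0$ under restriction, periodization, and Fourier transform. Second, the symplectic Fourier transform factorization must be carried out carefully, since all cocycle factors and the passage to the adjoint $\pi(w)^*$ must come out exactly right. Once these two points are secured, the remainder is routine bookkeeping of constants.
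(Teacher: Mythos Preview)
The paper does not actually prove this proposition: it simply refers the reader to \cite[Proposition 2.11]{ri88} and remarks that the Schwartz--Bruhat argument there adapts to $S_0(G)$. Your proposal is a correct sketch of precisely that adapted argument---reduce to a scalar identity by pairing against a test function in $S_0(G)$, recognize the integrand as a product of short-time Fourier transforms lying in $S_0(\tfp{G})$, invoke the Poisson summation formula for $S_0$-functions over the closed subgroup $\Sub$, and identify the symplectic Fourier transform of that product---so you are supplying exactly the details the paper omits by citation, and there is nothing further to compare.
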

\cref{prop:FIGA} allows us to rewrite the frame operator $S$ for $\GS(g;\Sub)$ as
\begin{equation}\label{eq:frame-op-rewritten}
	S f = \int_{\Sub} \hs{f}{\pi(z)g} \pi (z)g \dif z = \frac{1}{s(\Sub)} \sum_{w\in \Subo} \hs{\pi(w)g}{g} \pi(w)^* f.
\end{equation}
This observation is key in rephrasing the problem. It is the right hand side which is of importance to us, and it will be most natural to restate the frame operator in terms of a right $*$-representation of a twisted convolution algebra, see \cref{eq:frame-op-module-reform}. 

We will also need the continuous $2$-cocycle on $\tfp{G}$ known as the \emph{Heisenberg $2$-cocycle} \cite[p.\ 263]{ri88}. It is the map $c: (\tfp{G}) \times (\tfp{G}) \to \T$ given by
\begin{equation}\label{eq:heisenberg-cocycle}
	c((x,\omega), (y,\tau)) = \overline{\tau(x)}
\end{equation}
for $(x,\omega), (y,\tau) \in \tfp{G}$. Restricting to $\Subo$, we construct the convolution algebra $\ell^1 (\Subo, \cbar)$ as in \cref{sec:twisted-conv-algs}. Now the map
\begin{equation*}
	\begin{split}
	\pi^* \colon \tfp{G} &\to \U(L^2 (G)) \\
	(x,\omega) &\mapsto \pi(x,\omega)^*
	\end{split}
\end{equation*}
defines a right $\cbar$-projective unitary representation. Restricting to $\Subo$ we likewise get a right $\cbar$-projective unitary representation of $\Subo$, which we also denote by $\pi^*$. The integrated representation defines a right $*$-representation $\pi^* \colon \ell^1 (\Subo,\cbar)\to \mathbb{B}(L^2 (G))$. This representation leaves $S_0 (G)$ invariant, i.e.\ $\pi^* (\ell^1 (\Subo,\cbar))S_0 (G) \subseteq S_0 (G)$ \cite[Theorem 3.4]{jalu18duality}. Given $a= (a_w)_{w\in \Subo}\in \ell^1 (\Subo,\cbar)$ and $f \in L^2 (G)$ we have
\begin{equation*}
	\pi^* (a) f = \frac{1}{s(\Sub)}\sum_{w\in \Subo} a_w \pi(w)^* f.
\end{equation*}
Also, this $*$-representation is known to be faithful \cite[Proposition 2.2]{ri88}. Moreover, for $g \in S_0 (G)$ we have $(\hs{\pi(w)g}{g})_{w\in \Subo} \in \ell^1 (\Subo, \cbar)$ \cite[Theorem 3.4]{jalu18duality}. Using \eqref{eq:frame-op-rewritten} for the Gabor system $\GS(g;\Sub)$, $g \in S_0 (G)$, we now see that
\begin{equation}\label{eq:frame-op-module-reform}
	Sf = \pi^* ((\hs{\pi (w)g}{g})_{w\in \Subo})f
\end{equation}
for $f \in L^2 (G)$. We are finally ready to prove \cref{thm:continuous-frame-inverse-regularity}.

\begin{proof}[Proof of \cref{thm:continuous-frame-inverse-regularity}]
	If $g \in S_0 (G)$ is such that $\GS(g;\Sub)$ is a Gabor frame for $L^2 (G)$, then the corresponding frame operator $S$ is invertible. By \eqref{eq:frame-op-module-reform} we may write $Sf = \pi^* ((\hs{\pi (w)g}{g})_{w\in \Subo})f$ for any $f \in L^2 (G)$. In other words, if $S$ is invertible as an operator on $L^2 (G)$, then $\pi^* ((\hs{\pi (w)g}{g})_{w\in \Subo})$ is also invertible in $\mathbb{B} (L^2 (G))$. Since $\Subo$ is abelian, every compactly generated subgroup of $\Subo$ is of polynomial growth by the structure theorem for compactly generated locally compact abelian groups \cite[Theorem 4.2.2]{DeEc14}. Hence every compactly generated subgroup of $\Subo_{\cbar}$ is also of polynomial growth, since it is a compact extension of $\Subo$. It follows that $\Subo_{\cbar}$ is $C^*$-unique. Moreover, $\Subo_{\cbar}$ is nilpotent of class $1$ as $\Subo$ is abelian, so it follows that $\ell^1 (\Subo_{\cbar})$ is symmetric. By \cref{thm:main-thm} we then have that $\ell^1 (\Subo,\cbar)$ is spectrally invariant in $\mathbb{B}(L^2 (G))$. Hence there is $a = (a_w)_{w\in \Subo}\in \ell^1 (\Subo,\cbar)$ such that $a \natural_{\cbar} (\hs{\pi (w)g}{g})_{w\in \Subo} = 1_{\ell^1 (\Subo,\cbar)} = (\hs{\pi (w)g}{g})_{w\in \Subo} \natural_{\cbar} a$ and 
	\begin{equation*}
		S^{-1}f = \pi^* (a)f
	\end{equation*}
	for all $f \in L^2 (G)$. Since $\pi^* (\ell^1 (\Subo,\cbar))$ leaves $S_0 (G)$ invariant, it follows that $S^{-1}g \in S_0 (G)$.
	
	Since $S$, hence also $S^{-1}$, is a positive operator, we may also take the square root of the image of $a$ under $\pi^*$ in $\mathbb{B}(L^2 (G))$. By spectral invariance and the fact that Banach $*$-algebras are closed under holomorphic functional calculus \cite[p.\ 212]{Dales2000} it follows that there is $b= (b_w)_{w\in \Subo} \in \ell^1 (\Subo,\cbar)$ such that
	\begin{equation*}
		S^{-1/2}f = \pi^* (b)f
	\end{equation*} 
	for all $f \in L^2 (G)$. Once again, since $\pi^* (\ell^1 (\Subo,\cbar))$ leaves $S_0 (G)$ invariant, it follows that $S^{-1/2}g \in S_0 (G)$. This finishes the proof.
\end{proof}

\begin{rmk}\label{rmk:multiwindow-extension}
	There are no issues extending this to multi-window Gabor frames, i.e.\ the case of $g_1, \ldots, g_k \in S_0 (G)$ such that $\GS(g_1,\ldots, g_k ;\Sub) := \GS(g_1 ;\Sub) \cup \cdots \cup \GS(g_k ; \Sub)$ is a frame for $L^2 (G)$. Indeed, the only real difference is that we in \eqref{eq:frame-op-module-reform} will need to consider $\pi^* ((\sum_{i=1}^{k} \hs{\pi(w)g_i}{g_i})_{w\in \Subo})$. This is of no real consequence for the proofs. Hence we may conclude that for a multi-window Gabor frame $\GS(g_1 , \ldots , g_k ; \Sub)$ for $L^2 (G)$ with $g_1,\ldots , g_k \in S_0 (G)$ and associated (multi-window) frame operator $S$ we get $S^{-1}g_1, \ldots , S^{-1}g_k, S^{-1/2}g_1 , \ldots , S^{-1/2}g_k \in S_0 (G)$. Indeed one can go even further and do this for the matrix Gabor frames introduced in \cite{AuJaLu19}, which generalize multi-window super Gabor frames, using the setup from the same article. The key observation for doing this is that since $\ell^1 (\Subo,\cbar)$ is spectrally invariant in $\mathbb{B}(L^2 (G))$ we also have that $M_n (\ell^1 (\Subo,\cbar))$ is spectrally invariant in $M_n (\mathbb{B}(L^2 (G)))$ for any $n \in \N$ \cite{sc92}. 
\end{rmk}

\section{Acknowledgements}
The author would like to thank Michael Leinert for valuable feedback on an earlier draft of the article. 
Moreover, the author would like to thank Eduard Ortega for pointing out the concept of $C^*$-uniqueness, which broadened the scope and application of the article significantly. Lastly, the author also wishes to thank Eirik Berge, Mads S. Jakobsen, Franz Luef, and Eirik Skrettingland for valuable discussions during the development of the article.

\bibliography{main}
\bibliographystyle{abbrv}

\end{document}